\newcommand{\al}{\alpha}
\newcommand{\bt}{\beta}
\newcommand{\Dt}{\Delta}
\newcommand{\io}{\iota}
\newcommand{\la}{\lambda}
\newcommand{\La}{\Lambda}
\newcommand{\ph}{\varphi}
\newcommand{\si}{\sigma}
\newcommand{\om}{\omega}
\newcommand{\su}{\subset}
\newcommand{\sm}{\setminus}
\newcommand{\ti}{\times}
\newcommand{\ag}{\langle}
\newcommand{\ad}{\rangle}
\newcommand{\vi}{\emptyset}
\newcommand{\pa}{\partial}
\newcommand{\ov}{\overline}
\newcommand{\wh}{\widehat}
\newcommand{\ii}{\infty}
\newcommand{\C}{\mathcal C}
\newcommand{\U}{\mathcal U}
\newcommand{\V}{\mathcal V}
\newcommand{\W}{\mathcal W}
\newcommand{\CX}{\mathcal X}
\newcommand{\cH}{\check{H}^*}
\newcommand{\fS}{\mathfrak S}
\newcommand{\ft}{\mathfrak t}
\newcommand{\fT}{\mathfrak T}
\newcommand{\bS}{\mathbf S}
\DeclareMathOperator{\St}{St}
\DeclareMathOperator{\Tr}{Tr}
\DeclareMathOperator{\tr}{tr}
\DeclareMathOperator{\Hom}{Hom}
\newtheorem{lem}{Lemme}
\newtheorem{defi}{D\'efinition}
\begin{document}

\title[Conjecture de Schauder]{Une g\'en\'eralisation de la conjecture de point fixe de Schauder}

\author{Robert Cauty}

\address{Universit\'e Paris 6, Institut de math\'ematiques, case 247,4 place Jussieu, 75252 Paris cedex 05}

\email{cauty@math.jussieu.fr}

\subjclass{55M20}

\keywords{Schauder's conjecture, algebraic ANR}

\begin{abstract} We prove the following generalisation of Schauder's fixed point conjecture: Let $C_1,\dots,C_n$ be convex subsets of a Hausdorff topological vector space. Suppose that the $C_i$ are closed in $C=C_1\cup\dots\cup C_n$. If $f:C\to C$ is a continuous function whose image is contained in a compact subset of $C$, then its Lefschetz number $\La(f)$ is defined, and if $\La(f)\ne0$, then $f$ has a fixed point. \end{abstract}

\maketitle

\section{Introduction}
Soit $X$ un espace topologique s\'epar\'e. Une fonction continue $f:X\to X$ est dite compacte si $f(X)$ est contenu dans un sous-ensemble compact de $X$. Pour d\'efinir, lorsque cela est possible, le nombre de Lefschetz $\La(f)$ d'une telle fonction, nous utilisons la cohomologie d'Alexander-Spanier \`a coefficients rationnels.

Nous d\'emontrerons la g\'en\'eralisation suivante de la conjecture de point fixe de Schauder.

\medskip \noindent
{\bf Th\'eor\`eme.} {\it Soit $C=C_1\cup\dots\cup C_n$, o\`u $C_1,\dots,C_n$ sont des sous-ensembles convexes d'un e.v.t. s\'epar\'e $E$ qui sont ferm\'es dans $C$. Si $f:C\to C$ est une fonction continue compacte, alors $\La(f)$ est d\'efini. Si $\La(f)\ne0$, $f$ a un point fixe.}

\medskip
La d\'emonstration suit le sch\'ema que nous avons expliqu\'e dans \cite{Ca3} pour les espaces ULC. Malheureusement, les r\'esultats expos\'es dans \cite{Ca3} ne sont pas directement applicables ici, car nous ne savons pas si $C$ est ULC, m\^eme dans le cas o\`u il est m\'etrisable et $C=C_1\cup C_2$. Le probl\`eme vient de l'absence d'un th\'eor\`eme d'addition pour les espaces ULC. Soient $X_1$ et $X_2$ des ferm\'es d'un espace m\'etrisable $X$ tels que $X=X_1\cup X_2$. Si $X_1$, $X_2$ et $X_0=X_1\cap X_2$ sont ULC, $X$ est-il aussi ULC ? La r\'eponse affirmative n'est connue que dans le cas o\`u $X_0$ est un r\'etracte de voisinage de $X$ (voir \cite{Hi}), mais cette condition n'est pas toujours v\'erifi\'ee. Il existe des ensembles convexes m\'etrisables $C_0\su K$ tels que $C_0$ soit ferm\'e dans $K$, que $K$ soit un r\'etracte absolu, mais que $C_0$ n'en soit pas un (voir \cite{Ca1}); alors $C_0$ n'est pas un r\'etracte de voisinage de $K$. Si $C=C_1\cup C_2$ o\`u $(C_1,C_0)$ et $(C_2,C_0)$ sont des copies de $(K,C_0)$, $C$ est-il ULC ?

Comme dans tous les probl\`emes de point fixe de ce type, la partie la plus d\'elicate de la d\'emonstration est le cas o\`u $C$ est m\'etrisable. Dans ce cas, $C$ a le type d'homotopie du nerf du syst\`eme $\{C_1,\dots,C_n\}$, ce qui nous permet d'utiliser l'homologie singuli\`ere pour d\'efinir $\La(f)$, et une variante de la technique, introduite implicitement dans \cite{Ca2}, d'\og approximation\fg des fonctions continues par des morphismes de cha\^\i nes d\'efinis sur le complexe $S(C,\Bbb Q)$ des cha\^\i nes singuli\`eres de $C$ \`a coefficients rationnels. Toutefois, les arguments de \cite{Ca2} ne peuvent \^etre appliqu\'es directement ici car, bien que les $C_i$ soient des r\'etractes absolus de voisinage alg\'ebriques\footnote{La d\'efinition de ces espaces sera rappel\'ee \`a la section 3}, nous ne savons pas s'il en est de m\^eme de $C$. Cela vient du fait qu'on ne sait pas si le th\'eor\`eme d'addition est vrai pour les r\'etractes absolus de voisinage alg\'ebriques: si $X_1$ et $X_2$ sont des ferm\'es de l'espace m\'etrisable $X$ tels que $X=X_1\cup X_2$ et que $X_1$, $X_2$ et $X_0=X_1\cap X_2$ sont des r\'etractes absolus de voisinage alg\'ebriques, $X$ est-il aussi un r\'etracte absolu de voisinage alg\'ebrique ? Pour contourner cette difficult\'e, nous d\'emontrerons \`a la section 3 un r\'esultat d'invariance de l'homologie par \og petites d\'eformations\fg\  du complexe de cha\^\i nes $S(C,\Bbb Q)$.

Le passage du cas m\'etrisable au cas g\'en\'eral suit le sch\'ema, expos\'e dans \cite{Ca3}, utilisant l'espace convexe libre associ\'e \`a un compact $X$, et la repr\'esentation de ce compact comme limite d'un syst\`eme projectif particulier de compacts m\'etrisables.

Bien que l'utilisation des coefficients rationnels suffise pour la d\'emonstra\-tion du th\'eor\`eme, certains r\'esultats auxiliares sont vrais -avec les m\^emes d\'emon\-strations- pour tout anneau de coefficients, et nous les formulerons et d\'emontrerons dans cette g\'en\'eralit\'e.

\section{Pr\'eliminaires}
Dans tout cet article, $R$ d\'esigne un anneau unitaire. Tous les complexes de cha\^\i nes $M=\{M_p\}$ de $R$-modules que nous utiliserons sont positifs ($M_p=0$ pour $p<0$) et augment\'es sur $R$. {\it Tous les morphismes de cha\^\i nes entre deux tels complexes seront suppos\'es pr\'eserver l'augmentation}. Un complexe de cha\^\i nes est acyclique si son homologie r\'eduite est triviale. Un module gradu\'e est de type fini s'il est engendr\'e par un nombre fini d'\'el\'ements.

Pour $p\ge0$, soit $\Dt_p$ l'ensemble des points $(t_0,\dots,t_p)$ de $\Bbb R^{p+1}$ tels que $t_i\ge0$ pour $0\le i\le p$ et $\sum_{i=0}^pt_i=1$. Si $X$ est un espace topologique, un $p$-simplexe singulier de $X$ est une fonction continue $\si:\Dt_p\to X$. Les $0$-simplexes singuliers s'identifient naturellement aux points de $X$. Nous notons $S(X,R)$ le complexe des cha\^\i nes singuli\`eres de $X$ \`a coefficients $R$; le module des $p$-cha\^\i nes $S_p(X,R)$ de ce complexe est le module libre engendr\'e par les $p$-simplexes singuliers de $X$. Nous notons $H(X,R)$ le module gradu\'e d'homologie de ce complexe. Nous identifions chaque simplexe singulier $\si$ de $X$ \`a l'\'el\'ement $1\cdot\si$ de $S(X,R)$. Si $A$ est un sous-ensemble de $X$, nous identifions naturellement $S(A,R)$ \`a un sous-complexe de $S(X,R)$. Le support d'une cha\^\i ne $c\in S(X,R)$ est not\'e $||c||$; c'est le plus petit sous-ensemble $C$ de $X$ tel que $c\in S(C,R)$. En particulier, l'image du simplexe singulier $\si$ est not\'ee $||\si||$. Si $\si$ et $\tau$ sont deux simplexes singuliers de $X$, la notation $\si<\tau$ signifie que $\si$ est une face propre de $\tau$. Si $f:X\to Y$ est une fonction continue, nous notons $f_\#:S(X,R)\to S(Y,R)$ et $f_*:H(X,R)\to H(Y,R)$ les morphismes induits par $f$. Si $\ph$ est un morphisme de cha\^\i nes, nous notons $\ph_*$ le morphisme qu'il induit sur l'homologie.

Si $\C$ est une famille de sous-ensembles d'un espace topologique $X$, nous notons $S(X,\C,R)$ le sous-complexe de $S(X,R)$ engendr\'e par les simplexes singuliers dont l'image est contenue dans un \'el\'ement de $\C$. Le module gradu\'e d'homologie de $S(X,\C,R)$ est not\'e $H(X,\C,R)$. Il est connu (voir \cite{Sp}, \S4.4) que si $\U$ est un recouvrement ouvert de $X$, il existe un morphisme de cha\^\i nes $Sd_\U:S(X,R)\to S(X,\U,R)$ tel que $Sd_\U(c)=c$ pour $c\in S(X,\U,R)$ et que $||Sd(c)||\su ||c||$ pour toute cha\^\i ne $c$. En outre, notant $\io_\U$ l'inclusion de $S(X,\U,R)$ dans $S(X,R)$, il existe une homotopie $h:S(X,R)\to S(X,R)$ entre l'identit\'e et $\io_\U\circ Sd_\U$ telle que $||h(c)||\su||c||$ pour toute cha\^\i ne $c$. Un tel morphisme $Sd_\U$ est appel\'e un op\'erateur de subdivision. Les homomorphismes $\io_{\U*}:H(X,\U,R)\to H(X,R)$ et $Sd_{\U*}:H(X,R)\to H(X,\U,R)$ induits par $\io_\U$ et $Sd_\U$ respectivement sont des isomorphismes inverses l'un de l'autre.

Pour tout espace topologique $X$, nous notons $\cH(X,R)$ le module de cohomologie d'Alexander-Spanier de $X$ \`a coefficients $R$. Si $f:X\to Y$ est une fonction continue, nous notons $f^*:\cH(Y,R)\to \cH(X,R)$ l'homomorphisme qu'elle induit.

Si $K$ est un complexe simplicial, nous notons $|K|$ sa r\'ealisation g\'eom\'etrique. Si $s$ et $s'$ sont des simplexes de $K$, la notation $s\le s'$ (resp. $s<s'$) signifie que $s$ est une face de $s'$ (resp. une face propre de $s'$). Pour tout sommet $v$ de $K$, nous notons $\St(v,K)$ l'\'etoile ouverte de $v$ dans $|K|$. Si les sommets $v_0,\dots,v_p$ de $K$ d\'eterminent un simplexe, nous notons $\ag v_0,\dots,v_p\ad$ ce simplexe. Le barycentre d'un simplexe $s$ est not\'e $[s]$. Nous notons $K'$ la subdivision barycentrique de $K$; ses simplexes sont de la forme $\ag[s_0],\dots,[s_q]\ad$, o\`u $s_0,\dots,s_q$ sont des simplexes de $K$ tels que $s_0\le\dots\le s_q$. Nous identifions naturellement $|K|$ et $|K'|$. Pour tout simplexe $s$ de $K$, nous notons $\Tr s$ le sous-complexe de $K'$ form\'e des simplexes $\ag [s_0],\dots,[s_q]\ad$ tels que $s\le s_0$. $\Tr s$ est un c\^one de sommet $[s]$ ou est r\'eduit \`a ce point, donc $|\Tr s|$ est contractile.

Si $\C=\{C_1,\dots,C_p\}$ est une famille de sous-ensembles d'un espace $X$, le sommet du nerf de $\C$ correspondant \`a l'\'el\'ement $C_i\ne\vi$ sera identifi\'e \`a $i$. Si $s$ est un sous-ensemble non vide de $\{1,\dots,n\}$, nous posons $C_s=\bigcap_{i\in s}C_i$. Les simplexes du nerf de $\C$ sont les sous-ensembles non vides $s$ de $\{1,\dots,n\}$ tels que $C_s\ne\vi$.

\begin{lem} Si l'espace $C$ du th\'eor\`eme est normal, alors il a le type d'homotopie du nerf de $\C$. \end{lem}

\begin{proof} Soit $N$ le nerf de $\C$, et soit $m$ sa dimension. Si $s$ est un simplexe de $N$, nous notons $d(s)$ sa dimension. 

Construisons une fonction continue $f:C\to|N|$ telle que $f(C_s)\su|\Tr s|$ pour tout simplexe $s$ de $N$. Pour $0\le p\le m$, soit $C^{(p)}=\bigcup_{d(s)\ge p}C_s$. Alors $C^{(0)}=C$ et $C^{(m)}$ est la r\'eunion disjointe des $C_s$ tels que $d(s)=m$. Si $d(s)=m$, alors $\Tr s$ est le $0$-simplexe $\ag[s]\ad$, et nous d\'efinissons la restriction $f_m$ de $f$ \`a $C^{(m)}$ en posant $f_m(C_s)=[s]$ pour tout $m$-simplexe $s$. Soit $0\le p<m$, et supposons construite la restriction $f_{p+1}$ de $f$ \`a $C^{(p+1)}$. Si $d(s)=p$, alors $C_s\cap C^{(p+1)}=\bigcup_{s<s'}C_{s'}$. Si $s<s'$, alors $\Tr s'\su\Tr s$, donc $f_{p+1}(C_s\cap C^{(p+1)})\su|\Tr s|$. $|\Tr s|$, \'etant un poly\`edre fini contractile, est hom\'eomorphe \`a un r\'etracte d'un cube, donc est un extenseur absolu pour la classe des espaces normaux d'apr\`es le th\'eor\`eme de Tietze-Urysohn. Il existe donc une fonction continue $f_s:C_s\to|\Tr s|$ telle que $f_s|C_s\cap C^{(p+1)}=f_{p+1}|C_s\cap C^{(p+1}$. Si $s$ et $s'$ sont deux $p$-simplexes distincts, alors $C_s\cap C_{s'}\su C^{(p+1)}$, ce qui nous permet de d\'efinir la restriction $f_p$ de $f$ \`a $C^{(p)}$ par
$$
f_p(x)=\begin{cases}f_{p+1}(x)&\text{si $x\in C^{(p+1)}$}\\
f_s(x)&\text{si $x\in C_s$ avec $d(s)=p$}. \end{cases}
$$

Construisons aussi une fonction continue $g:|N|\to C$ telle que, pour tout simplexe $s'=\ag[s_0],\dots,[s_q]\ad$ de $N'$, $g(|s'|)\su C_{s_0}$. Si $s'=\ag[s_0]\ad$ est un $0$-simplexe de $N'$, prenons un point $x_{s_0}\in C_{s_0}$ et posons $g([s_0])=x_{s_0}$. Soit $0<q\le k$, et supposons d\'efinie la restriction de $g$ au $(q-1)$-squelette de $|N'|$. Soit $s'=\ag[s_0],\dots,[s_q]\ad$ un $q$-simplexe de $N'$. Pour toute face $s''$ de $s'$, nous avons $g(|s''|)\su C_{s_0}$, donc $g$ envoie le bord de $|s'|$ dans $C_{s_0}$. Comme $C_{s_0}$ est convexe, donc contractile, nous pouvons prolonger $g$ \`a $|s'|$ de fa\c con que $g(|s'|)\su C_{s_0}$. Pour tout simplexe $s$ de $N$, nous avons $ g(|\Tr s|)\su C_s$ car, si $s'=\ag[s_0],\dots,[s_q]\ad$ est un simplexe de $\Tr s$, alors $s\le s_0$, donc $g(|s'|)\su C_{s_0}\su C_s$.

Pour tout simplexe $s$ de $N$, nous avons $g\circ f(C_s)\su g(|\Tr s|)\su C_s$. Comme les $C_s$ sont convexes, nous pouvons d\'efinir une homotopie $h$ entre l'identit\'e de $C$ et $g\circ f$ par $h(x,t)=(1-t)x+tgf(x)$.

Pour tout simplexe $s$ de $N$, nous avons $f\circ g(|\Tr s|)\su f(C_s)\su|\Tr s|$. Nous ach\`everons de montrer que $f$ et $g$ sont des \'equivalences homotopiques en construisant une homotopie $k:|N|\ti I\to|N|$ entre l'identit\'e et $f\circ g$ telle que $k(|\Tr s|\ti I)\su|\Tr s|$ pour tout simplexe $s$ de $N$. Pour $0\le q\le m$, soit $T_q=\bigcup_{q\le d(s)}|\Tr s|$; alors $T_0=|N'|$. Si $d(s)=m$, alors $\Tr s=[s]$, et la restriction de $k$ \`a $|\Tr s|\ti I$ est d\'efinie par $k([s],t)=[s]$ pour tout $t$. Soit $0\le q<m$, et supposons d\'efinie la restriction $k_{q+1}$ de $k$ \`a $T_{q+1}\ti I$. Si $s$ est un $q$-simplexe de $N$, alors $|\Tr s|\cap T_{q+1}=\bigcup_{s<s'}|\Tr s'|$, donc $k_{q+1}((|\Tr s|\cap T_{q+1})\ti I)\su|\Tr s|$, et la contractilit\'e du poly\`edre $|\Tr s|$ nous permet de trouver une homotopie $k_s:|\Tr s|\ti I\to|\Tr s|$ entre l'identit\'e et la restriction de $f\circ g$ telle que $k_s|(|\Tr s|\cap T_{q+1})\ti I=k_{q+1}|(|\Tr s|\cap T_{q+1}\ti I$. Nous pouvons d\'efinir la restriction $k_q$ de $k$ \`a $T_q\ti I$ par
$$
k_q(x,t)=\begin{cases}k_{q+1}(x,t)&\text{si $x\in T_{q+1}$}\\
k_s(x,t)&\text{si $x\in |\Tr s|$ avec $d(s)=q$}. \end{cases}
$$
\end{proof}

Il r\'esulte du lemme 1 que, si $C$ est normal, alors $\cH(C,\Bbb Q)$ est de type fini, donc le nombre de Lefschetz $\La(f)$ est d\'efini pour toute fonction continue $f:C\to C$. En outre, $\cH(C,\Bbb Q)$ est alors naturellement isomorphe \`a $\Hom(H(C,\Bbb Q),\Bbb Q)$, ce qui permet d'utiliser l'homologie singuli\`ere pour calculer $\La(f)$.

Si $\C=\{C_1,\dots,C_n\}$ et $\C'=\{C'_1,\dots,C'_n\}$ sont deux familles de sous-ensembles de $X$ et $X'$ respectivement, nous dirons que les nerfs de $\C$ et $\C'$ sont naturellement isomorphes si, pour tout sous-ensemble non vide $s$ de $\{1,\dots,n\}$, les conditions $C_s\ne\vi$ et $C'_s\ne\vi$ sont \'equivalentes.

\begin{lem} Soient $\C=\{C_1,\dots,C_n\}$ et $\C'=\{C'_1,\dots,C'_n\}$ deux familles de sous-ensembles convexes des e.v.t. $E$ et $E'$ respectivement, et soient $C=C_1\cup\dots\cup C_n$ et $C'=C'_1\cup\dots\cup C'_n$. Supposons les $C_i$ ferm\'es dans $C$ et les $C'_i$ fem\'es dans $C'$. Soit $q:C\to C'$ une fonction continue telle que $q(C_i)\su C'_i$ pour tout $i$. Si $C$ et $C'$ sont normaux et si les nerfs de $\C$ et $\C'$ sont naturellement isomorphes, alors $q^*:\cH(C',R)\to\cH(C,R)$ est un isomorphisme. \end{lem}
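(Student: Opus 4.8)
The plan is to use Lemma 1 to replace both $C$ and $C'$ by the common nerve, and then to deduce the statement from the homotopy invariance of the Alexander--Spanier cohomology. Since the nerves of $\C$ and $\C'$ are naturellement isomorphes, they coincide as abstract simplicial complexes; write $N$ for this common nerve, so that $|N|$, the subcomplexes $\Tr s$, and the contractile polyhedra $|\Tr s|$ are the same for both families. Applying the construction of Lemma 1 to $C$ yields maps $f:C\to|N|$ and $g:|N|\to C$, inverse homotopy equivalences, with $f(C_s)\su|\Tr s|$ and $g(|\Tr s|)\su C_s$ for every simplexe $s$ de $N$; applying it to $C'$ yields a homotopy equivalence $f':C'\to|N|$ with $f'(C'_s)\su|\Tr s|$.

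First I would record the compatibility of $q$ with the stratification by the $C_s$: from $q(C_i)\su C'_i$ for all $i$ one gets $q(C_s)\su\bigcap_{i\in s}q(C_i)\su\bigcap_{i\in s}C'_i=C'_s$ for every simplexe $s$. Consequently the two self-maps $g\circ f$ and $g\circ f'\circ q$ of $C$ both respect this stratification: for $x\in C_s$ we have $g\circ f(x)\in g(|\Tr s|)\su C_s$, and likewise $f'\circ q(x)\in f'(C'_s)\su|\Tr s|$, so $g\circ f'\circ q(x)\in g(|\Tr s|)\su C_s$.

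The central step is to show that $g\circ f$ and $g\circ f'\circ q$ are homotopic, and here convexity does the work and lets me avoid any delicate question about normality of the products $C_s\ti I$. For $x\in C$ choose $i$ with $x\in C_i$; then both $g\circ f(x)$ and $g\circ f'\circ q(x)$ lie in the convex set $C_i$, so the segment joining them is contained in $C_i\su C$. Hence the straight-line formula $H(x,t)=(1-t)\,gf(x)+t\,gf'q(x)$ defines a continuous homotopy $C\ti I\to C$ from $g\circ f$ to $g\circ f'\circ q$ (continuity into $E$ is clear from the formula, and the image lies in $C$ by the preceding remark). I expect this to be the main obstacle conceptually: the naive attempt to homotope $f$ and $f'\circ q$ directly inside $|N|$ by induction over the filtration $C^{(p)}$ of Lemma 1 would require extending a map over $C_s\ti I$, and normality of $C_s$ alone does not guarantee that $C_s\ti I$ be normal; precomposing with $g$ and exploiting the convexity of the $C_i$ sidesteps this entirely.

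It remains to assemble the formal consequences, using that the Alexander--Spanier cohomology is homotopy invariant (\cite{Sp}), so that homotopic maps induce the same homomorphism on $\cH(\cdot,R)$. From $g\circ f\simeq\mathrm{id}_C$ and $f\circ g\simeq\mathrm{id}_{|N|}$ one gets that $f^*$ and $g^*$ are mutually inverse isomorphisms, and similarly $(f')^*$ is an isomorphism. The homotopy $g f\simeq g f' q$ gives $f^*\circ g^*=q^*\circ(f')^*\circ g^*$ as endomorphisms of $\cH(C,R)$; composing on the right with $(g^*)^{-1}$ yields $f^*=q^*\circ(f')^*$ as maps $\cH(|N|,R)\to\cH(C,R)$. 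Since $f^*$ and $(f')^*$ are isomorphisms, $q^*=f^*\circ((f')^*)^{-1}$ is an isomorphism, which is the assertion.
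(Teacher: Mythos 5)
Your proof is correct, and it differs from the paper's in the final step, so a comparison is in order. Both arguments share the same skeleton: take the common nerve $N$, apply the construction of Lemma 1 to $C'$ to get $f':C'\to|N|$ with $f'(C'_s)\su|\Tr s|$, and observe, exactly as you do, that $q(C_i)\su C'_i$ forces $q(C_s)\su C'_s$, so that $f'\circ q$ sends each $C_s$ into $|\Tr s|$. At this point the paper simply sets $f:=f'\circ q$ and remarks that \emph{the argument} of Lemma 1 (not merely its statement) applies to this $f$: the construction of $g$ there does not depend on $f$, and the two homotopies $h$ and $k$ use nothing about $f$ beyond the inclusions $f(C_s)\su|\Tr s|$; hence $f'\circ q$ is itself a homotopy equivalence, the identity $f^*=q^*\circ{f'}^*$ holds exactly by functoriality, and the conclusion follows in one line. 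You instead keep the map $f$ produced by Lemma 1 for $C$ and compare $g\circ f$ with $g\circ f'\circ q$ by the straight-line homotopy $H(x,t)=(1-t)\,gf(x)+t\,gf'q(x)$, which is legitimate: for $x\in C_i$ both endpoints lie in the convex set $C_i$, so the segment stays in $C$, and continuity into $E$ is automatic; you then cancel $g^*$ to recover $f^*=q^*\circ{f'}^*$ up to nothing at all (equality on cohomology). What the paper's route buys is brevity and the stronger output that $f'\circ q$ is itself a homotopy equivalence; what yours buys is that Lemma 1 is used more as a black box — you need only the maps $f,g$ and their stratification properties, not a re-inspection of how the homotopies in Lemma 1 were built — at the price of one extra convexity argument, which is in fact the very trick (straight-line homotopy inside the convex pieces) that the paper already exploits for the homotopy $h$ in Lemma 1. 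Your parenthetical point about normality is also sound: an inductive homotopy between $f$ and $f'\circ q$ constructed inside $|N|$ over the filtration by the $C^{(p)}$ would need $C_s\ti I$ to be normal, which normality of $C$ does not give; both your proof and the paper's avoid this pitfall because every homotopy involved is either a straight line in $C$ or is built on finite polyhedra.
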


\begin{proof} Soit $N$ le nerf commun \`a $\C$ et $\C'$. Comme dans le lemme 1, construisons une fonction continue $f':C'\to|N'|$ telle que $f'(C'_s)\su|\Tr s|$ pour tout simplexe $s$ de $N$. La fonction $f=f'\circ q:C\to|N|$ v\'erifie alors $f(C_s)\su|\Tr s|$ pour tout simplexe $s$ de $N$. L'argument du lemme 1 montre que $f'$ et $f$ sont des \'equivalences homotopiques, donc ${f'}^*:\cH(|N|,R)\to \cH(C',R)$ et $f^*:\cH(|N|,R)\to\cH(C,R)$ sont des isomorphismes. Puisque $f^*=q^*\circ {f'}^*$, $q^*$ est aussi un isomorphisme. \end{proof}

Nous avons aussi besoin d'une propri\'et\'e d'invariance de l'homologie singuli\`ere par les \og petites d\'eformations\fg.

\begin{lem} Si l'espace topologique $X$ a le type d'homotopie d'un complexe simplicial, alors il existe un recouvrement ouvert $\U$ de $X$ ayant la propri\'et\'e suivante: si $\V$ est un recouvrement ouvert de $X$ et si $\ph:S(X,\V,R)\to S(X,R)$ est un morphisme de cha\^\i nes tel que, pour tout simplexe singulier $\si$ de $S(X,\V,R)$, il existe un \'el\'ement de $\U$ contenant $||\si||$ et $||\ph(\tau)||$ pour toute face $\tau$ de $\si$, alors $\ph_*\circ Sd_{\V*}(a)=a$ pour tout $a\in H(X,R)$. \end{lem}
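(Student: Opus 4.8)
The plan is to transport the problem to $|K|$ and settle it by the method of acyclic carriers. Fix a simplicial complex $K$ and a homotopy equivalence $r\colon X\to|K|$, so that $r_*\colon H(X,R)\to H(|K|,R)$ is an isomorphism, and take for $\U$ the open covering $\{r^{-1}(\St(v,K))\}$, $v$ running over the vertices of $K$. For a simplex $s$ of $K$ I write $\St(s,K)=\{x\in|K|:s\le\mathrm{carr}(x)\}=\bigcap_{v\le s}\St(v,K)$ for its open star, $\mathrm{carr}(x)$ denoting the smallest simplex of $K$ containing $x$; each such $\St(s,K)$ is a nonempty contractible open set. Since $\io_{\V*}$ and $Sd_{\V*}$ are inverse isomorphisms, the asserted identity $\ph_*\circ Sd_{\V*}=\mathrm{id}$ is equivalent to $\ph_*=\io_{\V*}\colon H(X,\V,R)\to H(X,R)$, and because $r_*$ is an isomorphism it suffices to prove that the two augmentation-preserving chain maps $r_\#\circ\ph$ and $r_\#\circ\io_\V$ from $S(X,\V,R)$ to $S(|K|,R)$ are chain homotopic.

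To run the acyclic-carrier argument I must attach to each $\V$-small singular simplex $\si$ an acyclic subcomplex $\Phi(\si)$ of $S(|K|,R)$ that carries both maps and depends monotonically on $\si$. Let $V_\si$ be the set of vertices common to all the simplices $\mathrm{carr}(y)$ as $y$ ranges over $r(||\si||)\cup\bigcup_{\tau\le\si}r(||\ph(\tau)||)$. Read through the definition of $\U$, the hypothesis on $\ph$ says exactly that some vertex $v$ satisfies $v\le\mathrm{carr}(y)$ for all these $y$, i.e. that $V_\si\ne\vi$; moreover $V_\si$ is contained in the vertex set of any one $\mathrm{carr}(y_0)$, so it spans a simplex $s_\si=\ag V_\si\ad$ of $K$. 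I then set $\Phi(\si)=S(\St(s_\si,K),R)$.

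The step I expect to demand the most care is the verification that $\Phi$ is a monotone acyclic carrier of both maps. Acyclicity is immediate, $\St(s_\si,K)$ being a contractible open star. Both maps are carried by $\Phi$ because every point of $r(||\si||)$, respectively of $r(||\ph(\si)||)$ (take $\tau=\si$ in the defining family), has the vertices of $s_\si$ among those of its carrier and so lies in $\St(s_\si,K)$. The monotonicity is precisely where the asymmetric shape of the hypothesis — one element of $\U$ serving $\si$ together with all its faces — is essential: if $\tau\le\si$, the family of points defining $V_\tau$ is contained in the one defining $V_\si$, so the intersection defining $V_\tau$ is taken over fewer simplices and hence $V_\tau\supseteq V_\si$; therefore $s_\si\le s_\tau$, $\St(s_\tau,K)\su\St(s_\si,K)$, and $\Phi(\tau)\su\Phi(\si)$, as the carrier theorem requires.

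Granting this, the acyclic carrier theorem produces a chain homotopy between $r_\#\circ\ph$ and $r_\#\circ\io_\V$, whence $r_*\circ\ph_*=r_*\circ\io_{\V*}$ on $H(X,\V,R)$; cancelling the isomorphism $r_*$ gives $\ph_*=\io_{\V*}$, and composing with $Sd_{\V*}$ yields $\ph_*\circ Sd_{\V*}=\io_{\V*}\circ Sd_{\V*}=\mathrm{id}$. The only genuinely delicate points are those isolated above: that $V_\si$ is nonempty and spans a simplex, so that $\Phi(\si)$ is a nonempty contractible, hence acyclic, carrier, and the inclusion-reversing dependence $V_\tau\supseteq V_\si$ that makes $\Phi$ monotone along the face relation; the surrounding homological bookkeeping is routine.
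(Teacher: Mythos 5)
Your proof is correct and is essentially the paper's own argument: the paper likewise takes $\U$ to be the preimages, under a homotopy equivalence $q\colon X\to|N|$, of the open vertex stars, forms for each $\V$-small $\si$ the same simplex $s_\si$ of vertices whose stars contain $||\si||$ and all the $||\ph(\tau)||$, notes the monotonicity $s_\si\su s_\tau$ for $\tau\le\si$, and constructs the chain homotopy between $q_\#\circ\ph$ and $q_\#\circ\io_\V$ by induction on dimension using the acyclicity of the stars $\St(s_\si,N)$ --- which is exactly the acyclic carrier theorem you invoke. The only cosmetic differences are that you describe membership in $s_\si$ via carriers of points rather than via preimages of stars, and that you cite the carrier theorem as a black box where the paper redoes its inductive proof.
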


\begin{proof} Par hypoth\`ese, il existe un complexe simplicial $N$ et une \'equivalence homotopique $q:X\to|N|$. Soit $\U_0$ le recouvrement ouvert de $|N|$ form\'e des ensembles $\St(v,N)$, o\`u $v$ parcourt l'ensemble $N^{(0)}$ des sommets de $N$, et soit $\U=q^{-1}(\U_0)$ le recouvrement ouvert de $X$ form\'e des ensembles $q^{-1}(\St(v,N))$, $v\in N^{(0)}$.

Soient $\V$ un recouvrement ouvert de $X$ et $\ph:S(X,\V,R)\to S(X,R)$ un morphisme de cha\^\i nes tel que, pour tout simplexe singulier $\si$ de $S(X,\V,R)$, il existe un \'el\'ement $q^{-1}(\St(v,N))$ de $\U$ contenant $||\si||$ et $||\ph(\tau)||$ pour toute face $\tau$ de $\si$, et l'ensemble de ces $v\in N^{(0)}$ est un simplexe $s_\si$ de $N$. L'ensemble $\St(s_\si,N)=\bigcap_{v\in s_\si}\St(v,N)$ est contractile. Si $\tau$ est une face de $\si$, alors $s_\si\su s_\tau$, donc $\St(s_\tau,N)\su\St(s_\si,N)$, et $\St(s_\si,N)$ contient $q(||\tau||)\cup||q_\#\circ\ph(\tau)||$ pour toute face $\tau$ de $\si$. L'acyclicit\'e des ensembles $\St(s_\si,N)$ permet alors de construire, par r\'ecurrence sur la dimension de $\si$, un \'el\'ement $h(\si)\in H(\St(s_\si,N),R)$ tel que $\pa h(\si)+h\pa(\si)=q_\#(\si)-q_\#\circ\ph(\si)$. Les morphismes de cha\^\i nes $q_\#\circ\io_\V$ et $q_\#\circ\ph$ sont donc homotopes, et nous avons $q_*\circ\io_{\V*}(b)=q_*\circ\ph_*(b)$ pour tout $b\in H(X,\V,R)$. 

Pour $a\in H(X,R)$, nous avons $a=\io_{\V*}\circ Sd_{\V*}(a)$, d'o\`u
$$
q_*(a)=q_*\circ\io_{\V*}\circ Sd_{\V*}(a)=q_*\circ\ph_*\circ Sd_{\V*}(a).
$$

Puisque $q_*$ est un isomorphisme, cela entra\^\i ne que $a=\ph_*\circ Sd_{\V*}(a)$. \end{proof}

Si $\ph$ est un endomorphisme d'un espace vectoriel $E$, nous notons $\tr\ph$, quand elle est d\'efinie, la trace au sens de Leray de cet endomorphisme. Toutes les propri\'et\'es de la trace et du nombre de Lefschetz au sens de Leray dont nous avons besoin se trouvent dans \cite{GD}.

Si $\U$ est un recouvrement ouvert d'un espace $X$ et $A$ un sous-ensemble de $X$, la notation $\St(A,\U)$ d\'esigne la r\'eunion de tous les \'el\'ements de $\U$ qui rencontrent $A$. Nous notons $\St(\U)$ le recouvrement form\'e des ensembles $\St(U,\U)$ avec $U\in\U$. Si $Y$ est un sous-ensemble de $X$, la notation $\U|Y$ d\'esigne le recouvrement de $Y$ form\'e des ensembles $U\cap Y$, $U\in\U$.

Nous utiliserons le fait suivant dans la d\'emonstration du lemme 5.

\begin{lem} Soit $F$ un ferm\'e d'un espace m\'etrisable $X$, et soit $\U=\{U_\al\,|\,\al\in A\}$ un recouvrement localement fini de $F$ par des ouverts de $F$. Il existe une famille localement finie $\V=\{V_\al\,|\,\al\in A\}$ d'ouverts de $X$ telle que $V_\al\cap F=U_\al$ pour tout $\al\in A$ et que les nerfs de $\U$ et $\V$ soient naturellement isomorphes. \end{lem}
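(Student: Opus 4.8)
Le plan est de r\'ealiser chaque $V_\al$ comme l'ensemble de positivit\'e d'une extension bien choisie d'une fonction d\'efinissant $U_\al$. Je fixe une distance $d$ sur $X$, major\'ee par $1$, et je pose, pour $\al\in A$ et $y\in F$, $\la_\al(y)=\min(1,d(y,F\sm U_\al))$: c'est une fonction $1$-lipschitzienne sur $F$ dont l'ensemble de positivit\'e est exactement $U_\al$. J'\'etends ensuite $\la_\al$ \`a $X$ par l'inf-convolution
$$
\mu_\al(x)=\max\Bigl(0,\ \sup_{y\in F}\bigl(\la_\al(y)-2d(x,y)\bigr)\Bigr),
$$
et je pose $V_\al=\{x\in X\,|\,\mu_\al(x)>0\}$. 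Comme $\la_\al$ est $1$-lipschitzienne, $\mu_\al$ est $2$-lipschitzienne et $\mu_\al|F=\la_\al$, de sorte que $V_\al$ est ouvert et $V_\al\cap F=U_\al$. L'implication $\bigcap_{\al\in s}U_\al\ne\vi\Rightarrow\bigcap_{\al\in s}V_\al\ne\vi$ est imm\'ediate puisque $U_\al\su V_\al$.

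La partie agr\'eable est la r\'eciproque, qui donne l'isomorphisme naturel des nerfs et repose sur un argument de \og t\'emoin commun\fg. Soit $s\su A$ fini et $x\in\bigcap_{\al\in s}V_\al$; pour chaque $\al\in s$ je choisis $y_\al\in F$ avec $\la_\al(y_\al)-2d(x,y_\al)>0$, et je prends $\al_0\in s$ r\'ealisant le minimum des $d(x,y_\al)$. Pour tout $\bt\in s$, la lipschitzianit\'e de $\la_\bt$ donne $\la_\bt(y_{\al_0})\ge\la_\bt(y_\bt)-d(x,y_\bt)-d(x,y_{\al_0})$; or $\la_\bt(y_\bt)-d(x,y_\bt)>d(x,y_\bt)\ge d(x,y_{\al_0})$, d'o\`u $\la_\bt(y_{\al_0})>0$, c'est-\`a-dire $y_{\al_0}\in U_\bt$. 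Ainsi $y_{\al_0}\in\bigcap_{\al\in s}U_\al$, ce qui montre que les simplexes des nerfs de $\U$ et de $\V$ co\"\i ncident.

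La difficult\'e principale sera la finitude locale de $\V$. La port\'ee de $\mu_\al$ autour d'un point $y$ vaut $\la_\al(y)/2$, qui peut \^etre de l'ordre de $1$; une port\'ee fixe ne convient pas, car un point de $X\sm F$ \og voit\fg\ alors toute une boule de $F$, laquelle peut rencontrer une infinit\'e de $U_\al$. Pour contourner cela, je ferais d\'ependre la port\'ee de l'\'echelle de finitude locale: je pose $\rho(y)=\sup\{r>0\,|\,B(y,r)\cap F$ ne rencontre qu'un nombre fini de $U_\al\}$. La finitude locale de $\U$ donne $\rho>0$, et l'inclusion des boules montre que $\rho$ est $1$-lipschitzienne (quitte \`a la remplacer par $\min(1,\rho)$ si elle est infinie). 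Je remplace alors $\la_\al$ par $\la_\al'=\min\bigl(\la_\al,\tfrac1{10}\min(1,\rho)\bigr)$ dans la d\'efinition de $\mu_\al$: cette fonction reste $1$-lipschitzienne et de positivit\'e $U_\al$, de sorte que les paragraphes pr\'ec\'edents demeurent valables, mais la port\'ee en $y$ est d\'esormais major\'ee par $\rho(y)/20$. Un point $x_0$ \'etant donn\'e, si aucun $\mu_\al$ n'est positif pr\`es de $x_0$ la conclusion est triviale; sinon je fixe un t\'emoin $y_0$ d'un indice $\al_0$ avec $\mu_{\al_0}(x_0)>0$. Pour $x$ voisin de $x_0$ et tout $\al$ tel que $\mu_\al(x)>0$, le t\'emoin correspondant $y$ v\'erifie $d(x,y)<\la_\al'(y)/2\le\rho(y)/20$; une br\`eve estimation utilisant la lipschitzianit\'e de $\rho$ confine alors tous ces $y$ dans la boule $B(y_0,\rho(y_0))\cap F$, qui ne rencontre qu'un nombre fini de $U_\al$. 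Seuls un nombre fini de $V_\al$ rencontrent donc un voisinage de $x_0$. La continuit\'e des $\mu_\al$ et l'\'egalit\'e $\mu_\al|F=\la_\al'$ \'etant routini\`eres, c'est cette majoration de la port\'ee par l'\'echelle de finitude locale qui constitue le c\oe ur de la d\'emonstration.
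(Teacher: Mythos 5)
Votre approche est correcte dans son principe et suit une voie r\'eellement diff\'erente de celle de l'article, mais votre argument de finitude locale, tel qu'il est r\'edig\'e, comporte deux impr\'ecisions \`a rectifier. L'article proc\`ede par la m\'ethode du nerf: la finitude locale de $\U$ et la m\'etrisabilit\'e de $F$ fournissent une fonction canonique $\psi:F\to|N|$ ($N$ \'etant le nerf de $\U$) telle que $U_\al=\psi^{-1}(\St(\al,N))$; on la prolonge en $\bar\psi:W\to|N|$ sur un voisinage ouvert $W$ de $F$ (th\'eor\`eme d'extension de Hu), on pose $W_\al=\bar\psi^{-1}(\St(\al,N))$, puis on restreint \`a un voisinage ouvert $V$ de $F$ tel que $\ov V\su W$. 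L'isomorphisme naturel des nerfs y est transparent (les deux nerfs s'identifient \`a celui de la famille des \'etoiles ouvertes) et la finitude locale est h\'erit\'ee de celle des \'etoiles. Vous remplacez cette machinerie par une construction purement m\'etrique (sup-convolution de fonctions lipschitziennes) et par votre argument du \og t\'emoin commun\fg, qui est correct et \'el\'egant; votre preuve est ainsi plus autonome -- aucun recours \`a la th\'eorie de l'extension des applications \`a valeurs dans un poly\`edre -- au prix d'estimations explicites.

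Voici les deux retouches n\'ecessaires. (a) Votre disjonction de cas est fautive: il se peut que $x_0$ n'appartienne \`a aucun $V_\al$ alors que tout voisinage de $x_0$ rencontre des $V_\al$; vous ne pouvez alors pas \og fixer un t\'emoin $y_0$ d'un indice $\al_0$ avec $\mu_{\al_0}(x_0)>0$\fg. Le rem\`ede reste dans votre cadre: la formule d\'efinissant $\rho$ a un sens pour tout point de $X$ et y donne une fonction strictement positive ($1$-lipschitzienne apr\`es troncature \`a $1$), car pour $x\notin F$ la boule $B(x,d(x,F))$ ne rencontre aucun $U_\al$; on ancre alors l'estimation en $x_0$ lui-m\^eme. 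Si $d(x,x_0)<\rho(x_0)/20$ et si $\mu_\al(x)>0$ de t\'emoin $y\in U_\al$, alors $d(x,y)<\la'_\al(y)/2\le\rho(y)/20$, et la lipschitzianit\'e de $\rho$ donne $d(x_0,y)<\rho(x_0)/20+\rho(y)/20\le\rho(x_0)/10+d(x_0,y)/20$, d'o\`u $d(x_0,y)<\tfrac{2}{19}\rho(x_0)$. (b) La boule $B(y_0,\rho(y_0))\cap F$ ne rencontre pas n\'ecessairement qu'un nombre fini de $U_\al$: le supremum d\'efinissant $\rho$ n'est en g\'en\'eral pas atteint, et seules les boules de rayon \emph{strictement} inf\'erieur \`a $\rho$ ont cette propri\'et\'e. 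Ce point est sans cons\'equence une fois (a) corrig\'e, puisque tous les t\'emoins sont confin\'es dans $B\bigl(x_0,\tfrac{2}{19}\rho(x_0)\bigr)\cap F$, de rayon strictement inf\'erieur \`a $\rho(x_0)$, qui ne rencontre qu'un nombre fini de $U_\al$; comme chaque t\'emoin $y$ appartient au $U_\al$ correspondant, seuls un nombre fini d'indices $\al$ ont $V_\al$ rencontrant $B(x_0,\rho(x_0)/20)$. Avec ces deux corrections, votre d\'emonstration est compl\`ete.
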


\begin{proof} Nous pouvons supposer $U_\al\ne\vi$ pour tout $\al\in A$, ce qui nous permet d'identifier $A$ \`a l'ensemble des sommets du nerf $N$ de $\U$. Puisque $\U$ est localement fini et $F$ m\'etrisable, il existe une fonction continue $\psi:F\to|N|$ telle que $U_\al=\psi^{-1}(\St(\al,N))$ pour tout $\al\in A$. Puisque $X$ est m\'etrisable, il existe un voisinage ouvert $W$ de $F$ dans $X$ et une fonction continue $\bar\psi:W\to|N|$ prolongeant $\psi$ ( voir \cite{Hu}, th\'eor\`eme 10.4, page 105). Pour $\al\in A$, soit $W_\al=\bar\psi^{-1}(\St(\al,N))$; c'est un ouvert de $X$ tel que $W_\al\cap F=U_\al$. La famille $\{W_\al\,|\,\al\in A\}$ est localement finie dans $W$ (\cite{Hu}, lemme 10.2 page 102) et les nerfs des familles $\U$ et $\W$ sont naturellement isomorphes, \'etant isomorphes au nerf de la famille $\{\St(\al,N)\,|\,\al\in A\}$. Soit $V$ un voisinage ouvert de $F$ dans $X$ tel que $\ov V\su W$. Alors la famille $\V=\{V_\al\,|\,\al\in A\}$, o\`u $V_\al=W_\al\cap V$ pour tout $\al\in A$ est localement finie dans $X$ et v\'erifie $V_\al\cap F=U_\al$ pour tout $\al$. Puisque les nerfs des familles $\U$ et $\W$ sont naturellement isomorphes et que $U_\al\su V_\al\su W_\al$ pour tout $\al$, les nerfs des familles $\U$ et $\V$ sont naturellement isomorphes. \end{proof}

\section{Une propri\'et\'e des r\'eunions finies de r\'etractes absolus de voisinage alg\'ebriques}
Le r\'esultat que nous d\'emontrerons das cette section entra\^\i ne que, si l'espace $C$ du th\'eor\`eme est m\'etrisable, il est possible d'\og approximer\fg la fonction compacte $f$ par des morphismes de cha\^\i nes dont les images sont de type fini. C'est la clef de la d\'emonstration dans le cas m\'etrisable.

Pour tout complexe simplicial $N$, nous notons $C(N,R$ le complexe des cha\^\i nes orient\'ees de $N$ \`a coefficients $R$. Le module $C_q(N,R)$ des $q$-cha\^\i nes de ce complexe est libre, et a une base en correspondance biunivoque avec les $q$-simplexes de $N$, le g\'en\'erateur correspondant au $q$-simplexe $s$ \'etant un g\'en\'erateur de $C_q(s,R)$. Nous identifions $C_q(N,R)$ au $R$-module libre engendr\'e par les $q$-simplexes de $N$ en fixant une fois pour toutes un g\'en\'erateur de chaque $C_p(s,R)$, que nous notons encore $s$. Si $s$ est un $0$-simplexe, le g\'en\'erateur fix\'e de $C_0(s,R)$ est celui d'augmentation $1$.

Les notions suivantes ont \'et\'e introduites dans \cite{Ca2}.

\begin{defi} Soient $N$ un complexe simplicial, $X$ un espace topologique et $\U$ un recouvrement ouvert de $X$. Une r\'ealisation alg\'ebrique partielle de $N$ dans $X$ relativement \`a $\U$ est la donn\'ee d'un sous-complexe $M$ de $N$ contenant tous les sommets de $N$ et d'un morphisme de cha\^\i nes $\mu:C(M,R)\to S(X,R)$ tel que, pour tout simplexe $s$ de $N$, il existe un \'el\'ement de $\U$ contenant $||\mu(t)||$ pour toute face $t$ de $s$ appartenant \`a $M$. Si $M=N$, la r\'ealisation alg\'ebrique est dite compl\`ete. \end{defi}

\begin{defi} Un espace m\'etrisable $X$ est appel\'e un $R$-r\'etracte absolu de voisinage alg\'ebrique si, pour tout recouvrement ouvert $\U$ de $X$, il existe un recouvrement ouvert $\V$ de $X$ qui est plus fin que $\U$ et tel que, pour tout complexe simplicial $N$, toute r\'ealisation alg\'ebrique partielle de $N$ dans $X$ relativement \`a $\V$ se prolonge en une r\'ealisation compl\`ete de $N$ relativement \`a $\U$. \end{defi}

Dans le lemme suivant, l'anneau $R$ est suppos\'e noeth\'erien.

\begin{lem} Soient $X_1,\dots,X_n$ des ferm\'es d'un espace m\'etrisable $X$ tels que $X=X_1\cup\dots\cup X_n$ et que $X_u$ soit un $R$-r\'etracte absolu de voisinage alg\'ebrique pour tout sous-ensemble non vide $u$ de $\{1,\dots,n\}$ tel que $X_u=\bigcap_{i\in u}X_i\ne\vi$. Alors, pour tout recouvrement ouvert $\U$ de $X$, il existe un recouvrement ouvert $\V$ de $X$ et un morphisme de cha\^\i nes $\ph:S(X,\V,R)\to S(X,R)$ v\'erifiant

$(i)$ pour tout simplexe singulier $\si$ de $S(X,\V,R)$, il existe $U\in\U$ contenant $||\si||$ et $||\ph(\tau)||$ pour toute face $\tau$ de $\si$,

$(ii)$ pour tout compact $D$ de $X$, il existe un voisinage $E$ de $D$ dans $X$ tel que $\ph(S(E,R)\cap S(X,\V,R))$ soit de type fini. \end{lem}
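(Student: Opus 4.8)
The strategy is to build $\ph$ inductively over the nerve of the covering by the closed sets $X_u$, exploiting the defining extension property of $R$-rétractes absolus de voisinage algébriques one dimension at a time. First I would fix, for each nonempty $u\su\{1,\dots,n\}$ with $X_u\ne\vi$, a covering $\V_u$ of $X_u$ furnished by the algebraic-ANR property relative to the restriction $\U|X_u$; these will be the ``fine'' coverings in which partial algebraic realizations can be extended. The point of working with the intersections $X_u$ rather than $X$ directly is precisely that we do not know $X$ itself to be an algebraic ANR, so the construction must be assembled from the pieces $X_u$, whose algebraic-ANR property \emph{is} available by hypothesis. Using Lemme 4, I would thicken the relevant coverings of the closed sets $X_u$ to locally finite families of open sets of $X$ with naturally isomorphic nerves, so that a single open covering $\V$ of $X$ can be chosen refining all of them simultaneously while respecting the combinatorial structure of the $X_u$.

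The core of the argument is the construction of $\ph$ on $S(X,\V,R)$. For a singular simplex $\si$ in $S(X,\V,R)$, its image $||\si||$ lies in some element of $\V$ and hence meets only finitely many of the $X_u$; this determines a combinatorial ``type'' for $\si$ governed by the nerve of $\C$. I would then define $\ph(\si)$ by induction on $\dim\si$, at each stage invoking the partial-to-complete extension property of the algebra ANR $X_u$ associated to $\si$ to produce $\ph(\si)$ as a chain in $S(X,R)$ supported, together with all the $\ph(\tau)$ for faces $\tau<\si$, inside a single member of $\U$. This gives condition $(i)$ directly. The standard device here is that a singular chain-level extension is exactly what the definition of algebraic realization packages: one treats the boundary data $\ph(\pa\si)$ as a partial realization of the simplex $\si$ and extends it over $\si$ relative to $\U$, using that $R$ is noethérien to keep the modules under control.

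For condition $(ii)$, the finiteness must come from local finiteness of the chosen families together with the noetherian hypothesis on $R$. Given a compact $D$, I would take a neighborhood $E$ meeting only finitely many sets of the locally finite families built via Lemme 4; then only finitely many combinatorial types of simplex occur for chains in $S(E,R)\cap S(X,\V,R)$, and the noetherian property ensures that the submodule of $S(X,R)$ generated by the corresponding finitely many $\ph$-images is of finite type. The main obstacle, and the step I expect to require the most care, is the simultaneous bookkeeping in the inductive construction of $\ph$: one must choose the neighborhoods and the refinement $\V$ so that the extension step on $X_u$ stays compatible, across the overlaps $X_u\cap X_{u'}\su X_{u\cup u'}$, with the values already defined on lower-dimensional faces and on the intersections, all while preserving local finiteness so that $(ii)$ survives. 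Reconciling the extension property (which is available only on each $X_u$ separately) with the gluing over the nerve is where the real work lies.
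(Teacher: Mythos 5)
Your outline assembles the right ingredients (the extension property of the $X_u$, Lemme~4, local finiteness, noetherianity of $R$), and your treatment of condition $(i)$ is in the spirit of the paper's. But there is a genuine gap at condition $(ii)$, and it is not a matter of bookkeeping: if you define $\ph(\si)$ by induction on $\dim\si$, extending the boundary data $\ph(\pa\si)$ separately for each singular simplex $\si$, then $\ph(\si)$ depends on $\si$ itself and not only on its \og combinatorial type\fg. Since $S(E,R)\cap S(X,\V,R)$ contains infinitely many (in fact uncountably many) singular simplices, your construction produces infinitely many a priori distinct chains $\ph(\si)$, and the image module has no reason to be finitely generated; noetherianity of $R$ does not help, because the problem is an infinite generating set, not submodules of a finitely generated module. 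Your claim that \og only finitely many combinatorial types occur \dots\ and the noetherian property ensures the image is of finite type\fg\ is a non sequitur unless $\ph(\si)$ actually depends only on the type of $\si$ --- which your construction does not arrange. A related difficulty: the extension property of D\'efinition~2 concerns chain maps $C(M,R)\to S(X_u,R)$ defined on subcomplexes of a simplicial complex, so invoking it \og one singular simplex at a time\fg, treating $\ph(\pa\si)$ as a partial realization of $\si$, is not literally what the definition provides.

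The missing idea, which is the heart of the paper's proof, is to force the dependence on combinatorial data by factoring $\ph$ through the chain complex of the nerve: the paper writes $\ph=\xi\circ\zeta$, where $K$ is the nerve of a locally finite covering $\V$, $\zeta:S(X,\V,R)\to C(K',R)$ sends each singular simplex $\si$ into the acyclic cone $C(K'_\si,R)$ of the barycentric subdivision determined by the flags $s_{\si_q}\su\dots\su s_{\si_0}$ of nerve simplices attached to chains of faces $\si_0<\dots<\si_q$ of $\si$ (constructed by induction on dimension using this acyclicity), and $\xi:C(K',R)\to S(X,R)$ is built by descending recursion on $j=m(u)$, applying the algebraic-ANR extension property of $X_u$ to the simplicial complexes $K'_u$ --- this is where singular data has been converted into simplicial data to which D\'efinition~2 applies. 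With this factorization, condition $(ii)$ becomes immediate: a neighborhood $E$ of the compact $D$ meeting only finitely many $V_\al$ gives $\ph(S(E,R)\cap S(X,\V,R))\su\xi(C(K'_E,R))$ with $K'_E$ a finite complex, and noetherianity then does its (modest) job. Finally, the compatibility over the overlaps that you correctly flag as delicate is handled in the paper not by a single covering $\V_u$ refining $\U|X_u$, but by a hierarchy of coverings $\U^j$, $\U_u$, $\W_u$ indexed by the cardinality $j=m(u)$, with star-refinement conditions ensuring that the already-constructed part of $\xi$ restricts to a partial algebraic realization of $K'_u$ relative to $\U_u$; without such a hierarchy the inductive extensions on the various $X_u$ need not match up.
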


\begin{proof} Pour tout sous-ensemble $u$ de $\{1,\dots,n\}$, nous notons $m(u)$ son cardinal. Soit $k$ le plus grand entier tel qu'il existe un sous-ensemble $u$ de $\{1,\dots,n\}$ avec $m(y)=k$ et $X_u\ne\vi$. Pour $1\le j\le k$, soit $X(j)=\bigcup_{m(u)\ge j}X_u$; c'est un sous-ensemble ferm\'e de $X$.

Soit $\U$ un recouvrement ouvert de $X$. Soit $\U^0$ un recouvrement ouvert de $X$ tel que $\St(\U^0)$ soit plus fin que $\U$. Pour $1\le j\le k$, nous construirons un recouvrement (relativement) ouvert $\U^j$ de $X(j)$ et, pour tout sous-ensemble $u$ de $\{1,\dots,n\}$ tel que $X_u\ne\vi$, nous construirons des recouvrements (relativement) ouverts $\U_u$ et $\W_u$ de $X_u$ de fa\c con que les conditions suivantes soient v\'erifi\'ees.

\medskip \noindent
(1) Si $m(u)=j$, alors $\U_u$ est plus fin que $\U^{j-1}|X_u$ et, pour tout complexe simplicial $N$, toute r\'ealisation alg\'ebrique partielle de $N$ dans $X_u$ relativement \`a $\U_u$ se prolonge en une r\'ealisation alg\'ebrique compl\`ete relativement \`a $\U^{j-1}|X_u$.

\medskip \noindent
(2) $\St(\W_u)$ est plus fin que $\U_u$.

\medskip \noindent
(3) $\U^j$ est plus fin que $\U^{j-1}|X(j)$ pour tout $0<j\le k$.

\medskip \noindent
(4) $\U^j|X_u$ est plus fin que $\W_u$ pour tout $u\su\{1,\dots,n\}$ tel que $m(u)=j$.

\medskip
Si $\U^{j-1}$ est construit, le fait que $X_u$ est un $R$-r\'etracte absolu de voisinage alg\'ebrique nous permet de trouver $\U_u$ v\'erifiant (1), et l'existence de $\W_u$ r\'esulte de la paracompacit\'e de $X_u$. Une fois les $\W_u$ construits pour tout $u$ tel que $m(u)=j$, tout point $x$ de $X(j)$ a un voisinage $O(x)$ tel que $O(x)\cap X_u$ soit contenu dans un \'el\'ement de $\W_u$ pour tout $u$ tel que $m(u)=j$ et $x\in X_u$ et que $O(x)\cap X_u=\vi$ si $x\notin X_u$. Prenons alors pour $\U^j$ un recouvrement ouvert de $X(j)$ plus fin que $\U^{j-1}|X(j)$ et que $\{O(x)\,|\,x\in X(j)\}$.

Nous construirons maintenant, par r\'ecurrence descendante, des ensembles $A_k\su\dots\su A_1$ et, pour $k\ge j\ge1$, un recouvrement ouvert localement fini $\V_j=\{V_{\al,j}\,|\,\al\in A_j\}$ de $X_j$ qui est plus fin que $U^j$. Nous construirons aussi, pour tout sous-ensemble non vide $u$ de $\{1,\dots,n\}$ tel que $X_u\ne\vi$, un sous-ensemble $A'_u$ de $A_{m(u)}$ de fa\c con que $A_j$ soit la r\'eunion disjointe des $A'_u$ avec $m(u)\ge j$ et que la condition suivante soit v\'erifi\'ee.

\medskip \noindent
(5) Si $i\notin u$, alors $V_{\al,j}\cap X_i=\vi$ pour tout $\al\in A'_u$ et tout $j\le m(u)$.

\medskip
$X(k)$ est la r\'eunion disjointe des $X_u\ne\vi$ tels que $m(u)=k$. Pour chaque tel $u$, soit $\V_u=\{V_{\al,k}\,|\,\al\in A'_u\}$ un recouvrement ouvert localement fini de $X_u$ plus fin que $\U^k$. Soit $A_k$ la r\'eunion disjointe des $A'_u$ avec $m(u)=k$, et soit $\V_k=\bigcup_{m(u)=k}\V_u$.

Soit $1\le j<k$, et supposons $A_{j+1}$ et $\V_{j+1}$ construits. Le lemme 4 nous permet de trouver une famille $\wh\V_{j+1}=\{V_{\al,j}\,|\,\al\in A_{j+1}\}$ d'ouverts de $X(j)$, localement finie dans $X(j)$,  telle que $V_{\al,j}\cap X(j+1)=V_{\al,j+1}$ pour tout $\al\in A_{j+1}$, et dont le nerf est naturellement isomorphe au nerf de $\V_{j+1}$. Puisque $\V_{j+1}$ est plus fin que $\U^{j+1}$, qui est plus fin que $\U^j$, nous pouvons, quitte \`a diminuer les $V_{\al,j}$, supposer que $\wh\V_{j+1}$ est plus fine que $\U^j$. Si $i\notin u$ et si $m(u)\ge j+1$, alors $V_{\al,j+1}$ est disjoint du ferm\'e $X_i$ pour tout $\al\in A'_u$; quitte \ a diminuer $V_{\al,j}$, nous pouvons aussi supposer qu'il est disjoint de $X_i$. Pour tout $v\su\{1,\dots,n\}$ tel que $m(v)=j$, recouvrons le ferm\'e $X_v\sm \bigcup\{V_{\al,j}\,|\,\al\in A_{j+1}\}\su X_v\sm X(j+1)$ par une famille localement finie $\V_v=\{V_{\al,j}\,|\,\al\in A'_v\}$ d'ouverts de $X_v$ contenus dans $X_v\sm X(j+1)$ et dans des \'el\'ements de $\U^j$. Comme $X_v\sm X(j+1)$ est ouvert dans $X(j)$, les $V_{\al,j}$ avec $\al\in A'_v$ sont ouverts dans $X(j)$; en outre, ils v\'erifient (5). Soit $A_j$ la r\'eunion disjointe de $A_{j+1}$ et des $A'_v$ avec $m(v)=j$, et soit $\V_j=\wh\V_{j+1}\cup\bigcup_{m(v)=j}\V_v$.

Posons $A=A_1$, $\V=\V_1$ et $V_\al=V_{\al,1}$ pour $\al\in A$. Nous supposons que les $V_\al$ sont non vides, ce qui nous permet d'identifier $A$ \ a l'ensemble des sommets du nerf $K$ de $\V$. Pour $u\su\{1,\dots,n\}$ tel que $X_u\ne\vi$, soit $A_u=\bigcup_{u\su v}A'_v$. Notons que

\medskip \noindent
(6) pour tout simplexe $s=\ag\al_0,\dots,\al_q\ad$ de $K$, il existe un unique sous-ensemble $u_s$ de $\{1,\dots,n\}$ tel que $s\cap A'_{u_s}\ne\vi$ et $s\su A_{u_s}$; en outre, si $j_s=m(u_s)$, alors $V_{\al_0}\cap\dots\cap V_{\al_q}\cap X_{u_s}\ne\vi$.

\medskip
En effet, soit $j_s$ le plus grand entier tel que $\{\al_0,\dots,\al_q\}\su A_{j_s}$. Il r\'esule du choix des familles $\wh\V_{j+1}$ que $V_{\al_0,j_s}\cap\dots\cap V_{\al_q,j_s}\ne\vi$. Il existe $p$ tel que $\al_p\in A_{j_s}\sm A_{j_s+1}$ ($A_{k+1}=\vi$). Si $u_s$ est le sous-ensemble de $\{1,\dots,n\}$ tel que $\al_p\in A'_{u_s}$, il r\'esulte de (5) que $u_s$ ne d\'epend pas du choix de l'\'el\'ement $\al_p$ tel que $\al_p\in A_{j_s}\sm A_{j_s+1}$ et que $\al_a\in A_{u_s}$ pour $1\le a\le q$. La deuxi\`eme affirmation de (6) r\'esulte du fait que, $V_{\al_p,j_s}\su X_{u_s}\sm X(j_s+1)$.

Puisque $\V$ est localement fini, si $\si$ est un simplexe singulier de $S(X,\V,R)$, l'ensemble des $\al\in A$ tels que $||\si||\su V_\al$ est un simplexe $s_\si$ de $K$. Si $\tau$ est une face de $\si$, alors $s_\si\su s_\tau$. Soit $\fT_\si$ l'ensemble des suites finie $\ft=(\si_0,\dots,\si_q)$ de faces de $\si$ v\'erifiant $\si_0<\dots<\si_q$; pour une telle $\ft$, $\ag[s_{\si_q}],\dots,[s_{\si_0}]\ad$ est un simplexe $s'_\ft$ de $K'$. Soit $K'_\si$ le sous-complexe de $K'$ form\'e des simplexes $s'_\ft$ o\`u $\ft$ parcourt $\fT_\si$.

Toute suite $\ft\in\fT_\si$ est une sous-suite d'un \'el\'ement $\ft'=(\si_0,\dots,\si_p)$ de $\fT_\si$ tel que $\si_p=\si$, donc $s'_\ft$ est face d'un simplexe $s'_{\ft'}$ de $K'_\si$ dont $[s_\si]$ est un sommet. Il en r\'esulte que $K'_\si$ est un c\^one de sommet $[s_\si]$ ou est r\'eduit \`a $[s_\si]$, donc $C(K'_\si,R)$ est acyclique. Si $\si<\tau$ sont deux simplexes singuliers de $S(X,\V,R)$, alors $\fT_\si\su\fT_\tau$, donc $K'_\si\su K'_\tau$. Proc\'edant par r\'ecurrence sur la dimension de $\si$, l'acyclicit\'e des complexes $C(K'_\si,R)$ nous permet de trouver un morphisme de cha\^\i nes $\zeta:S(X,\V,R)\to C(K',R)$ tel que $\zeta(\si)\in C(K'_\si,R)$ pour tout simplexe singulier $\si\in S(X,\V,R)$.

Pour $1\le j\le k$, soit $K_j$ le sous-complexe plein de $K$ engendr\'e par les sommets appartenant \`a $A_j$; posons $K_{k+1}=\vi$. Pour $1\le j\le k+1$, soit $L_j={K'}^{(0)}\cup K'_j$, o\`u ${K'}^{(0)}$ est le $0$-squelette de $K'$. Pour $u\su\{1,\dots,n\}$, soit $K_u$ le sous-complexe plein de $K$ engendr\'e par les sommets appartenant \`a $A_u$.

Nous construirons maintenant un morphisme de cha\^\i nes $\xi:C(K',R)\to S(X,R)$ v\'erifiant

\medskip \noindent (7) Pour tout simplexe $s'=\ag[s_0],\dots,[s_p]\ad$ de $K'$, il existe $U\in \U^{j(s_0)-1}$ tel que $\xi(C(s',R))\su S(X_{u(s_0)}\cap U,R)$ ($u(s_0)$ et $j(s_0)$ comme dans (6)).

\medskip
Pour construire $\xi$, fixons, pour tout simplexe $s=\ag\al_0,\dots,\al_q\ad$ de $K$, un point $x_s\in V_{\al_0}\cap\dots\cap V_{\al_q}\cap X_{u_s}$. Nous d\'efinissons la restriction $\xi_{k+1}$ de $\xi$ \`a $C(L_{k+1},R)=C({K'}^{(0)},R)$ en envoyant le g\'en\'erateur de $C([s],R)$ sur le $0$-simplexe singulier $x_s$ pour tout simplexe $s$ de $K$. Nous construirons ensuite les restrictions $\xi_j$ de $\xi$ \`a $C(L_j,R)$ par r\'ecurrence descendante. Soit $1\le j\le k$, et supposons $\xi_{j+1}$ construite. Pour tout $u\su\{1,\dots,n\}$ tel que $m(u)=j$, soit $M_u=K'_u\cap L_{j+1}$; c'est un sous-complexe de $K'_u$ contenant tous les sommets de $K'_u$. Nous allons v\'erifier que la restriction $\bar\xi_u$ de $\xi_{j+1}$ \`a $M_u$ est une r\'ealisation alg\'ebrique partielle de $K'_u$ dans $X_u$ relativement \`a $\U_u$. D'apr\`es (1), $\bar\xi_u$ se prolongera en une r\'ealisation alg\'ebrique compl\`ete $\xi_u$ relativement \`a $\U^{j-1}|X_u$. Comme $L_j=L_{j+1}\cup\bigcup_{m(u)=j}K'_u$ et $K'_{u_1}\cap K'_{u_2}\su L_{j+1}$ si $m(u_1)=m(u_2)=j$ et $u_1\ne u_2$, nous pourrons alors d\'efinir $\xi_j$ par $\xi_j|C(L_{j+1},R)=\xi_{j+1}$ et $\xi_j|C(K'_u,R)=\xi_u$ pour tout $u$ tel que $m(u)=j$. La condition (7) pour les simplexes de $K'_u$ r\'esulte de la d\'efinition d'une r\'ealisation alg\'ebrique compl\`ete relativement \`a $\U^{j-1}|X_u$.

Fixons $u$ tel que $m(u)=j$. Soit $s'=\ag[s_0],\dots,[s_p]\ad$ un simplexe de $K'_u$, et soit $\al_0$ un sommet de $s_0$. Pour $1\le a\le p$, nous avons $u\su u_{s_a}$, d'o\`u $X_{u_{s_a}}\su X_u$, et le point $x_{s_a}$ appartient \`a $V_{\al_0}\cap X_u=V_{\al_0,j}\cap X_u$. Puisque $\V_j$ est plus fin que $\U^j$, il existe $U_0\in\U^j$ tel que $x_{s_a}\in U_0\cap X_u$ pour tout $a\le p$. Soit $\bar s=\ag[s_{a_1}],\dots,[s_{a_r}]\ad$ une face de $s'$ contenue dans $L_{j+1}$. La condition (7) \'etant v\'erifi\'ee au rang $j+1$, il existe $U_{\bar s}\in\U^{j(s_{a_1})-1}$ tel que $||\xi(\hat s)||\su U_{\bar s}\cap X_{u(s_{a_1})}\su U_{\bar s}\cap X_u$ pour toute face $\hat s$ de $\bar s$; en particulier, $U_{\bar s}$ contient $x_{s_{a_1}}$. Puisque $\bar s$ est contenu dans $L_{j+1}$, nous avons $j(s_{a_1})>j$, et (3) garantit l'existence d'un \'el\'ement $U_{\bar s}^j$ de $\U^j$ contenant $U_{\bar s}$. Puisque $x_{s_{a_1}}\in U_{\bar s}\cap U_0\cap X_u$, nous avons $U_{\bar s}^j\cap X_u\su\St(U_0\cap X_u,\U^j|X_u)$, donc $\St(U_0\cap X_u,\U^j|X_u)$ contient $||\xi(\bar s)||$ pour toute face $\bar s$ de $s'$ appartenant \`a $L_{j+1}$. Les conditions (4) et (2) garantissent que $\St(U_0\cap X_u,\U^j|X_u)$ est contenu dans un \'el\'ement de $\U_u$, donc $\bar\xi_u$ est bien une r\'ealisation alg\'ebrique partielle dans $X_u$ relativement \ a $\U_u$. La construction de $\xi$ est ainsi achev\'ee.

Soit $\ph=\xi\circ\zeta:S(X,\V,R)\to S(X,R)$.

Soit $\si$ un simplexe singulier de $S(X,\V,R)$. Il existe $\al_0\in A$ tel que $||\si||\su V_{\al_0}$. Pour toute face $\tau$ de $\si$, $\al_0$ est un sommet de $s_\tau$, ce qui entra\^\i ne que, pour tout $\ft\in\fT_\si$, le point $x_{s_\ft}$ appartient \`a $V_{\al_0}$. Il r\'esulte de (7) et (3) qu'il existe, pour tout simplexe $s'_\ft$ de $K'_\si$, un \'el\'ement $U_\ft$ de $\U^0$ tel que $\xi(C(s'_\ft,R))\su S(U_\ft,R)$. En particulier, si $[s_\tau]$ est un sommet de $s_\ft$ ($\tau$ est une face de $\si$), $U_\ft$ contient le point $x_{s_\tau}$ de $V_{\al_0}$, donc est contenu dans $\St(V_{\al_0},\U^0)$. Nous avons donc $\xi(C(K'_\si,R))\su S(\St(V_{\al_0},\U^0),R)$. Puisque $\zeta(\tau )$ appartient \`a $C(K'_\si,R)$ pour toute face $\tau$ de $\si$, $\St(V_{\al_0},\U^0)$ contient $||\si||\cup||\ph(\tau)||$ pour toute face $\tau$ de $\si$. Comme $\V$ est plus fin que $\U^0$ et $\St(\U^0)$ plus fin que $\U$, la condition (i) est v\'erifi\'ee.

Soit $D$ un compact de $X$. Puisque $\V$ est localement fini, il existe un voisinage $E$ de $D$ dans $X$ tel que l'ensemble $A_E$ des $\al\in A$ pour lesquels $V_\al\cap E\ne\vi$ est fini. Soit $K_E$ le sous-complexe plein de $K$ engendr\'e par les sommets appartenant \`a $A_E$. Pour tout simplexe singulier $\si$ appartenant \`a $S(E,R)\cap S(X,\V,R)$, $s_\tau$ appartient \`a $K_E$ pour toute face $\tau$ de $\si$, donc $K'_\si$ est contenu dans $K'_E$. Il en r\'esulte que $\ph(S(E,R)\cap S(X,\V,R))$ est contenu dans $\xi(C(K'_E,R))$. Comme $K'_E$ est un complexe fini, $C(K'_E,R)$ est de type fini, et il en est de m\^eme de tout sous-complexe de $\xi(C(K'_E,R))$ puisque $R$ est noeth\'erien.
\end{proof}

\section{D\'emonstration du th\'eor\`eme}
Dans toute cette section, nous utiliserons exclusivement des coefficients rationnels pour l'homologie et la cohomologie.

Les r\'esultats des sections 2 et 3 nous permettent de traiter le cas o\`u l'espace $C$ du th\'eor\`eme est m\'etrisable, cas qui est la partie la plus d\'elicate de la d\'emonstration.

\begin{lem} Le th\'eor\`eme est vrai quand $C$ est m\'etrisable. \end{lem}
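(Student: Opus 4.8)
The plan is to prove the contrapositive: assuming that $f$ has no fixed point, I will build, for a sufficiently fine open cover, a chain endomorphism of $S(C,\Bbb Q)$ that realises $f_*$ on homology, has a finitely generated image, and carries no singular simplex onto its own diagonal; the Leray trace formula then forces $\La(f)=0$, contradicting the hypothesis.

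First I set up the homological data. Since $C$ is metrizable it is normal, so Lemme 1 gives a homotopy equivalence $C\simeq|N|$ with $N$ the finite nerve of $\C$; hence each $H_p(C,\Bbb Q)$ is finite dimensional and $\La(f)=\sum_p(-1)^p\tr\bigl(f_*\colon H_p(C,\Bbb Q)\to H_p(C,\Bbb Q)\bigr)$ is defined. As $C$ has the homotopy type of a complex, Lemme 3 furnishes an open cover $\U_3$ of $C$ enjoying the deformation-invariance property. Each $C_u=\bigcap_{i\in u}C_i$ is convex, closed in $C$ and metrizable, hence a $\Bbb Q$-rétracte absolu de voisinage algébrique, so Lemme 5 applies with the Noetherian ring $\Bbb Q$. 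Let $K=\ov{f(C)}$, a compact subset of $C$ because $f$ is compacte. Choosing an open cover $\U$ of $C$ that refines $\U_3$ and feeding it to Lemme 5 produces a cover $\V$ and a chain morphism $\ph\colon S(C,\V,\Bbb Q)\to S(C,\Bbb Q)$ satisfying $(i)$ and $(ii)$.

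Now set $\theta=\ph\circ Sd_\V\circ f_\#\colon S(C,\Bbb Q)\to S(C,\Bbb Q)$. Since $f_\#$ lands in $S(K,\Bbb Q)$ and $Sd_\V$ does not enlarge supports, property $(ii)$ applied to a neighbourhood $E\supset K$ shows $\theta\bigl(S(C,\Bbb Q)\bigr)\su\ph\bigl(S(E,\Bbb Q)\cap S(C,\V,\Bbb Q)\bigr)$, a finitely generated module; thus $\theta$ is a Leray endomorphism in each degree. Because $\U$ refines $\U_3$, condition $(i)$ is precisely the hypothesis of Lemme 3 for $\ph$, so $\ph_*\circ Sd_{\V*}=\mathrm{id}$ on $H(C,\Bbb Q)$ and therefore $\theta_*=f_*$. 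By the Leray trace theorem (see \cite{GD}), $\sum_p(-1)^p\tr(\theta_p)=\sum_p(-1)^p\tr(\theta_{*})=\La(f)$, where each chain-level $\tr(\theta_p)$ is the finite sum of the diagonal coefficients of $\theta$ in the basis of singular simplices. It remains to refine $\U$ so that no singular simplex $\si$ occurs with nonzero coefficient in $\theta(\si)$. Fixing a metric $d$ on $C$, the positive continuous function $x\mapsto d(x,f(x))$ admits a bound $\delta>0$ on a neighbourhood $E$ of the compactum $K$; shrink the elements of $\U$ so that every $U\in\U$ meeting $K$ has diameter $<\delta$ and lies in $E$. Writing $Sd_\V(f_\#\si)=\sum_j c_j\tau_j$ with $\tau_j\in S(C,\V,\Bbb Q)$ and $||\tau_j||\su f(||\si||)\su K$, condition $(i)$ yields $U_j\in\U$ with $||\tau_j||\su U_j$ and $||\ph(\tau_j)||\su U_j$. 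If $\si$ appeared in $\theta(\si)=\sum_j c_j\ph(\tau_j)$, then $||\si||\su U_j$ for some $j$ whose $U_j$ meets $K$, hence $U_j\su E$ and $U_j$ contains a point $f(x')$ with $x'\in||\si||\su U_j$; then $d(x',f(x'))<\delta$ with $x'\in E$, a contradiction. Thus every $\tr(\theta_p)=0$, so $\La(f)=0$, and $f$ must have a fixed point.

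The main obstacle is to choose a single cover $\U$ that is at once fine enough for the deformation invariance of Lemme 3, admissible for the approximation of Lemme 5, and metrically small near $K$, and then to verify — using only the support condition $(i)$ together with $||Sd_\V(c)||\su||c||$ — that a contributing simplex would confine both $x'$ and $f(x')$ to one small $U_j$, producing the forbidden coincidence. Equally crucial is that, although $C$ need not be locally compact, property $(ii)$ confines $\theta$ to a finitely generated subcomplex supported near $K$; this is exactly what makes the Leray trace available and localises the whole fixed-point analysis to a neighbourhood of the compactum $K$.
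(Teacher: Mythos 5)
Your proposal is correct and follows essentially the same architecture as the paper's proof: Lemme 1 to make $\La(f)$ well defined and allow singular homology, Lemme 5 (applicable because metrizable convex sets are $\Bbb Q$-r\'etractes absolus de voisinage alg\'ebriques) to produce $\ph$, the endomorphism $\theta=\ph\circ Sd_\V\circ f_\#$ whose image is finitely generated by (ii) and which induces $f_*$ by Lemme 3 together with (i), and finally the Leray--Hopf trace computed on the finite set of simplices occurring in the image. The one genuine divergence is the separation step under the no-fixed-point hypothesis: the paper argues purely with covers, choosing $\U_1$ such that $U\cap\St(f(U),\U_1)=\vi$ for every $U\in\U_1$ and intersecting it into $\U$, so that $||\theta(\si)||\su\St(f(U_1),\U_1)$ is disjoint from $U_1\supset||\si||$; you instead use the metric directly, bounding $d(x,f(x))>\delta$ on a neighbourhood of the compactum $K$ and requiring the elements of $\U$ meeting $K$ to have diameter $<\delta$, so that a nonzero diagonal coefficient would trap some $x'$ and $f(x')$ in a single small element of $\U$. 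Both implementations are sound; yours is slightly more elementary but tied to the metric (harmless here, since this lemma is exactly the metrizable case), while the paper's formulation avoids the metric and so matches the style of the rest of the argument. One presentational caveat: since $\V$ and $\ph$ depend on $\U$, the metric smallness of $\U$ near $K$ must be imposed \emph{before} Lemme 5 is invoked, whereas your narrative ``refines $\U$'' after $\theta$ has been constructed; your closing paragraph makes clear you intend a single cover fixed up front, so this is an ordering slip in the write-up, not a gap in the proof.
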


\begin{proof} D'apr\`es le lemme 1, $C$ a le type d'homotopie d'un complexe fini, donc $\La(f)$ est d\'efini et nous pouvons utiliser l'homologie singuli\`ere pour le calculer. Soit $\U_0$ un recouvrement ouvert de $C$ v\'erifiant la conclusion du lemme 3. 

Supposons que $f$ n'ait pas de point fixe. Nous pouvons alors trouver un recouvrement ouvert $\U_1$ de $C$ tel que $U\cap \St(f(U),\U_1)=\vi$ pour tout $U\in\U_1$.

Soit $\U$ le recouvrement ouvert de $C$ form\'e des ensembles $U_0\cap U_1$ avec $U_0\in\U_0$ et $U_1\in\U_1$. Soit $K$ un compact de $C$ tel que $f(C)\su K$. Puisque tout convexe m\'etrisable est un r\'etracte absolu de voisinage alg\'ebrique (\cite{Ca2} th\'eor\`eme 8), le lemme 5 s'applique et nous fournit un recouvrement ouvert $\V$ et un morphisme de cha\^\i nes $\ph:S(C,\V,\Bbb Q)\to S(C,\Bbb Q)$ v\'erifiant les conditions (i) et (ii) de ce lemme relativement \`a $\U$.

Soit $\psi=\ph\circ Sd_\V\circ f_\#:S(C,\Bbb Q)\to S(C,\Bbb Q)$. Puisque $\U$ est plus fin que $\U_0$, la condition (i) du lemme 5 et le lemme 3 entra\^\i nent que $\psi_*=\ph_*\circ Sd_{\V*}\circ f_*=f_*$, donc $\La(\psi_*)=\La(f)$. Nous avons $f_\#(S(C,\Bbb Q))\su S(K,\Bbb Q)$, donc $Sd_\V\circ f_\#(S(C,\Bbb Q))\su S(K,\Bbb Q)\cap S(C,\V,\Bbb Q)$, et la condition (ii) du lemme 5 garantit que l'image de $\psi$ est de type fini, donc $\La(\psi)$ est d\'efini. Alors $\La(\psi_*)=\La(\psi)$ et, pour achever la d\'emonstration du lemme, il suffit de montrer que $\La(\psi)=0$.

Les simplexes singuliers de $S(C,\Bbb Q)$ forment une base de ce complexe, et les coordonn\'ees que nous utilisons sont relatives \`a cette base. Soit $\fS$ l'ensemble des simplexes singuliers $\si$ pour lesquels il existe $c\in S(C,\Bbb Q)$ tel que $\psi(c)$ ait une coordonn\'ee non nulle sur $\si$, et soit $E$ le sous-complexe de $S(C,\Bbb Q)$ engendr\'e par $\fS$. Comme l'image de $\psi$ est de type fini, $\fS$ est fini, donc $E$ est de type fini. Comme $E$ contient l'image de $\psi$, nous avons $\psi(E)\su E$, et si $\psi':E\to E$ est le morphisme induit par $\psi$, alors $\La(\psi)=\La(\psi')$. Pour prouver que $\La(\psi)=0$, il suffit de montrer que si $\psi'_q$ est la composante de degr\'e $q$ de $\psi'$, alors $\tr\psi'_q=0$ pour tout $q\ge0$. Pour cela, il suffit de v\'erifier que, pour tout simplexe $\si\in \fS$, la coordonn\'ee de $\psi(\si)$ sur $\si$ est nulle, ce que nous ferons en montrant que $||\psi(\si)||\cap||\si||=\vi$ pour tout $\si\in\fS$.

Pour tout simplexe singulier $\tau$ de $S(C,\Bbb Q)$, $Sd_\V\circ f_\#(\tau)$ est une combinaison lin\'eaire $\sum\la_i\tau_i$, o\`u $||\tau_i||$ est contenu dans un \'el\'ement de $\V$. Alors $\psi(\tau)=\sum\la_i\ph(\tau_i)$, et la condition (i) du lemme 5 entra\^\i ne que $||\ph(\tau_i)||$ est contenu dans un \'el\'ement de $\U$. Par suite, pour tout simplexe singulier $\si\in\fS$, il existe un \'el\'ement de $\U$ contenant $||\si||$. Par d\'efinition de $\U$, il existe $U_1\in\U_1$ tel que $||\si|| \su U\su U_1$. Soit $Sd_\V\circ f_\#(\si)=\sum_{a=1}^m\mu_a\si_a$, o\`u $||\si_a||$ est contenu dans un \'el\'ement de $\V$ et dans $||f_\#(\si)||\su f(U_1)$. Pour tout $a\le m$, il existe, d'apr\`es (i) du lemme 5, un \'el\'ement $U'_a$ de $\U_1$ contenant $||\si_a||\cup||\ph(\si_a)||$. Puisque $||\si_a||\su f(U_1)\cap U'_a$, nous avons $U'_a\su\St(f(U_1),\U_1)$, et il en r\'esulte que $||\psi(\si)||\su\bigcup_{a=1}^m||\ph(\si_a)||\su\St(f(U_1),\U_1)$, donc $||\psi(\si)||$ est disjoint de $U_1\supset||\si||$ d'apr\`es le choix de $\U_1$.
\end{proof}

Le passage du cas m\'etrisable au cas g\'en\'eral du th\'eor\`eme se fait par un m\'ecanisme syst\'ematique d\'ecrit dans \cite{Ca2}, qui utilise deux ingr\'edients: le convexe libre associ\'e \`a un espace compact, et la repr\'esentation des compacts non m\'etrisables comme limites de syst\`emes projectifs particuliers de compacts m\'etrisables.

A tout espace compact $X$ est associ\'e un espace convexe libre contenant $X$ et caract\'eris\'e par la propri\'et\'e universelle suivante: pour toute fonction continue $f$ de $X$ dans un sous-ensemble convexe $C$ d'un e.v.t., il existe une unique application affine continue $F:C(X)\to C(X)$ telle que $F|X=f$. Ce convexe libre peut se construire comme suit. Pour $m\ge0$, soit $C_m(X)$ l'ensemble des combinaisons convexes formelles $\sum_{i=0}^m\la_ix_i$ de $m+1$ points de $X$ (pas n\'ec\'essairement distincts; $\la_i\ge0$ et $\sum_{i=0}^m\la_i=1$). $C_m(X)$ est muni de la topologie quotient du produit $X^{m+1}\ti\Dt_m$ par l'application $\mu_m$ d\'efinie, pour $x_0,\dots,x_m$ dans $X$ et $(\la_0,\dots,\la_m)\in\Dt_m$, par
$$
\mu_m(x_0,\dots,x_m,(\la_0,\dots,\la_m))=\sum_{i=0}^m\la_ix_i.
$$

$C_m(X)$ est un compact, s'identifie naturellement \`a un sous-espace de $C_{m+1}(X)$, et $C(X)=\bigcup_{m=0}^\ii C_m(X)$ est la limite inductive de la suite croissante de compacts $\{C_m(X)\}$. Le compact $X$ est identifi\'e au sous-espace $C_0(X)$ de $C(X)$. Notons que $C(X)$ est un sous-ensemble convexe ferm\'e de l'espace vectoriel topologique libre $E(X)$ engendr\'e par $X$, que nous avons utilis\'e dans \cite{Ca1}.

Si $Y$ est un sous-ensemble ferm\'e de $X$, alors $C(Y)$ s'identifie naturellement \ a un sous-espace ferm\'e de $C(X)$. Le lemme suivant est un cas particulier du th\'eor\`eme.

\begin{lem} Soient $X_1,\dots,X_n$ des ferm\'es d'un compact $X$ tels que $X=X_1\cup\dots\cup X_n$, et soit $K=C(X_1)\cup\dots\cup C(X_n)\su C(X)$. Si $f:K\to K$ est une fonction continue telle que $f(K)\su X$, alors $\La(f)$ est d\'efini. Si $\La(f)\ne0$, alors $f$ a un point fixe.  \end{lem}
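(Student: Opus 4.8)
The plan is to reduce Lemma 7 (the compact case) to the metrizable case already proved in Lemma 5, by exploiting the structure of $C(X)$ as an inductive limit together with the representation of a compact $X$ as an inverse limit of metrizable compacta. Let me sketch how I would proceed.

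First I would set up the inverse system. Since $X$ is compact Hausdorff, it is the limit of an inverse system $\{X_\al,p_{\al\bt}\}$ of metrizable compacta (e.g. by taking images under countable families of continuous maps to metric spaces, or via the standard embedding into a product of metric quotients). I would choose this system carefully so that it is compatible with the closed cover: that is, so that each $X_\al$ carries closed subsets $X_{\al,i}$ with $X_\al=\bigcup_i X_{\al,i}$, with $p_{\al\bt}(X_{\bt,i})\su X_{\al,i}$, and with $X_i=\lim_\al X_{\al,i}$ for each $i$. The key functorial fact is that the free convex set construction commutes with inverse limits of compacta: $C(X)=\lim_\al C(X_\al)$, because $C_m(X)$ is a quotient of $X^{m+1}\ti\Dt_m$ and each $C_m$ commutes with the (compact) inverse limit, and the inductive-limit topology is preserved. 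Consequently $K=\bigcup_i C(X_i)$ is the inverse limit of the metrizable compacta $K_\al=\bigcup_i C(X_{\al,i})$, each of which falls under Lemma 5 since the $C(X_{\al,i})$ are metrizable convex sets closed in $K_\al$.

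Next I would approximate the map $f$. The compact map $f:K\to K$ with $f(K)\su X$ must be pushed down to the finite stages. Using that $K=\lim K_\al$ and that $f(K)\su X=\lim X_\al$, together with the universal property of $C(X)$ to produce affine extensions, I would produce for cofinally many $\al$ a continuous self-map $f_\al:K_\al\to K_\al$ with $f_\al(K_\al)\su X_\al$ and $p_\al\circ f=f_\al\circ p_\al$ (perhaps only up to a controlled homotopy, which is enough for the Lefschetz-number computation). Here the affine-extension property is what lets me define $f_\al$ on all of $K_\al=\bigcup_i C(X_{\al,i})$ from its values on $X_\al$: I extend the approximating map from $X$ to $X_\al$, then extend affinely over each $C(X_{\al,i})$ compatibly on overlaps. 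By Lemma 5, each $\La(f_\al)$ is defined and, if nonzero, forces a fixed point of $f_\al$.

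Then I would pass to the limit on both the homological and the fixed-point side. On the homological side, by Lemma 1 each $K_\al$ has the homotopy type of the (common) nerf $N$ of the cover, and the bonding maps induce isomorphisms on $\cH(-,\Bbb Q)$ by Lemma 2 (the nerves are naturally isomorphic and the maps respect the cover); hence $\cH(K,\Bbb Q)\cong\cH(K_\al,\Bbb Q)$ via continuity of \v Cech/Alexander--Spanier cohomology for inverse limits, and $f^*$ corresponds to $f_\al^*$. This shows $\La(f)$ is defined and equals the common value $\La(f_\al)$, establishing the first assertion. For the fixed-point assertion, suppose $\La(f)\ne0$; then each $\La(f_\al)\ne0$, so by Lemma 5 the fixed-point set $\mathrm{Fix}(f_\al)$ is a nonempty closed subset of the compact $K_\al$. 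The sets $p_\al^{-1}(\mathrm{Fix}(f_\al))$, or rather the fixed-point sets organized along the inverse system, form a filtered family of nonempty compacta whose limit is contained in $\mathrm{Fix}(f)$; by compactness the intersection is nonempty, yielding a fixed point of $f$.

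The main obstacle I anticipate is the second step: arranging the approximating maps $f_\al$ so that they are genuinely self-maps of $K_\al$ preserving the condition $f_\al(K_\al)\su X_\al$ and so that their Lefschetz numbers are forced to agree with $\La(f)$. The difficulty is that $f$ need not factor strictly through any finite stage, so $f_\al$ exists only up to the mesh of the approximation, and one must check that the homotopies implicit in $p_\al\circ f\simeq f_\al\circ p_\al$ are fine enough that the induced maps on the finite-dimensional $\cH(K_\al,\Bbb Q)\cong\cH(N,\Bbb Q)$ coincide exactly, not merely approximately. Controlling this compatibility uniformly across the cover $\{C(X_{\al,i})\}$ — so that the affine extensions glue and the nerve-level map is the identity — is the delicate point, and it is precisely where the invariance built into Lemmas 2 and 3 must be invoked.
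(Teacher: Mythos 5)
Your overall skeleton --- represent $X$ as an inverse limit of metrizable compacta, push $f$ down to finite stages, transfer Lefschetz numbers via Lemma 2, and extract a fixed point from an inverse limit of nonempty fixed-point sets --- is indeed the paper's strategy, and your final limiting arguments match it. But both of the two crucial intermediate steps have genuine gaps. The first: you claim each $K_\al=\bigcup_iC(X_{\al,i})$ falls under the metrizable case of the theorem (Lemma 6 of the paper) because the $C(X_{\al,i})$ are metrizable convex sets closed in $K_\al$. They are not metrizable: even when $X_\al$ is a metrizable compactum, the free convex set $C(X_{\al,i})$ carries the inductive-limit topology of the increasing sequence of compacta $C_m(X_{\al,i})$, which is not metrizable (not even first countable) as soon as $X_{\al,i}$ is infinite. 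This is precisely why the paper needs a separate Lemma 9: using \cite{Ca1} it produces a metrizable vector topology $\tau$ on $E(X_\al)$, coarser than the free topology, for which the $C(X_{\al,i})$ remain closed and the map remains continuous; Lemma 6 is applied to $(K_\al,\tau)$, and Lemma 2, applied to the identity $K_\al\to(K_\al,\tau)$, shows this change of topology does not alter $\cH(\cdot,\Bbb Q)$, hence not $\La$ nor the fixed points. Without this step the base case of your reduction is simply absent.

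The second gap is your construction of $f_\al$ by affine extension of (an approximation of) $f|X$ over each $C(X_{\al,i})$. This cannot work: $f$ is an arbitrary continuous map with $f(K)\su X$, it is not affine, and its values on $K\sm X$ are in no way determined by $f|X$. Moreover the affine extension of a map $X_\al\to X_\al$ sends $C(X_{\al,i})$ into $C(X_\al)$ but not into $K_\al$ (nothing places the convex hull of the image of $X_{\al,i}$ inside some $C(X_{\al,j})$), and even where it makes sense it satisfies $f_\al\circ\tilde p_\al\simeq\tilde p_\al\circ f$ at best on $X$, not on $K$, so neither $\La(f_\al)=\La(f)$ nor the relation between fixed-point sets follows. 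This is exactly the obstacle you flag in your last paragraph, and Lemmas 2 and 3 do not resolve it. What resolves it in the paper is the $\om$-system machinery: Chigogidze's theorem (Lemma 10, from \cite{Ch}), which factors \emph{arbitrary} continuous maps between limits of $\om$-systems of compacta with surjective projections, is applied not to $K$ itself (which is not compact, so the theorem does not apply to it) but to each compact level $K_m=K\cap C_m(X)$, using that $C_m(\,\cdot\,)$ commutes with inverse limits so that $(K_{\al,m})_\al$ is again an $\om$-system. This gives, for each $m$, a cofinal closed index set $A_m$ and a morphism of systems with limit $f|K_m$; the countable-chain property $(*)$ of $\om$-systems is then exactly what guarantees that $B=\bigcap_mA_m$ is still cofinal, and the surjectivity of the projections $p_{\al,m}$ makes the level-$m$ maps glue into honest self-maps $f_\al:K_\al\to K_\al$ satisfying $f_\al\circ\tilde p_\al=\tilde p_\al\circ f$ exactly. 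Finally, your identification $K=\lim_\al K_\al$ together with an appeal to continuity of \v{C}ech cohomology over this non-compact limit is both doubtful and unnecessary: the paper never forms $\lim_\al K_\al$; it uses only the level-wise limits $K_m=\lim_\al K_{\al,m}$ and obtains $\La(f)=\La(f_\al)$ from Lemma 2 applied to $\tilde p_\al$, once the nerves of $\CX$ and $\CX_\al$ have stabilized for $\al$ large.
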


Pour tout compact $X$, $C(X)$ est r\'egulier et $\si$-compact, donc paracompact. Si $K$ est comme dans le lemme 7, il est paracompact, donc, comme nous l'avons remarqu\'e apr\`es le lemme 1, $\cH(K,\Bbb Q)$ est de type fini et $\La(f)$ est d\'efini.

L'utilisation de $C(X)$ est justifi\'ee par la remarque suivante.

\begin{lem} Si le lemme 7 est vrai, il en est de m\^eme du th\'eor\`eme. \end{lem}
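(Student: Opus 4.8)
The plan is to realize $f$, up to the relevant traces, as a self-map of the compact space $K$ of Lemma 7, using the canonical projection of the free convex space. Since $f$ is compact, I first choose a compact set $X$ with $f(C)\su X\su C$ and put $X_i=X\cap C_i$. As $C_i$ is closed in $C$ and $X\su C$, each $X_i$ is closed in $X$, and $X=X\cap C=\bigcup_i X_i$; thus $X$ and the $X_i$ satisfy the hypotheses of Lemma 7, and I form $K=C(X_1)\cup\dots\cup C(X_n)\su C(X)$. The canonical affine projection $\pi:C(X)\to E$, sending a formal convex combination $\sum\la_jx_j$ to the actual point $\sum\la_jx_j\in E$, is continuous (being continuous on each compact $C_m(X)$) and satisfies $\pi(C(X_i))\su C_i$ by convexity of $C_i$; hence it restricts to a continuous map $\pi:K\to C$. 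Moreover $\pi$ is the identity on $X=C_0(X)$, so, writing $j:X\to K$ for the inclusion and $\io:X\to C$ for the inclusion $X\su C$, we have $\pi\circ j=\io$.

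Next I set $g=f\circ\pi:K\to K$. This is well defined, since $\pi(K)\su C$ gives $g(K)\su f(C)\su X\su K$, and in fact $g(K)\su X$. Lemma 7 applies to $g$ and yields that $\La(g)$ is defined, and that if $\La(g)\ne0$ then $g$ has a fixed point $z$. Such a $z$ lies in $g(K)\su X$, so $\pi(z)=z$, whence $f(z)=f(\pi(z))=g(z)=z$; thus $f$ has a fixed point in $X\su C$.

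It remains to identify $\La(f)$ with $\La(g)$, which I would do through the commutativity of the Leray trace, $\tr(uv)=\tr(vu)$ (with one side defined if and only if the other is), from \cite{GD}. Let $f':C\to X$ be $f$ with codomain restricted to $X$. On one hand $f=\io\circ f'$ while $f'\circ\io=f|_X=:h:X\to X$, so $\La(f)=\La(h)$. On the other hand $g=j\circ(f'\circ\pi)$ while $(f'\circ\pi)\circ j=f'\circ(\pi\circ j)=f'\circ\io=h$, so $\La(g)=\La(h)$. Since Lemma 7 gives $\La(g)$ defined, the trace-commutativity forces $\La(f)$ to be defined as well, with $\La(f)=\La(g)$; this already establishes the first assertion of the theorem. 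Finally, if $\La(f)\ne0$ then $\La(g)\ne0$, and the fixed point of $g$ produces a fixed point of $f$ exactly as above.

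The construction of $X$, $K$ and $\pi$, together with the verifications $\pi(C(X_i))\su C_i$, $\pi\circ j=\io$ and $g(K)\su X$, is routine. The crux of the argument is that both equalities $\La(f)=\La(h)$ and $\La(g)=\La(h)$ flow from the \emph{same} intermediary $h=f|_X$, which is precisely what the two factorizations are arranged to produce; this is what allows the trace-commutativity to simultaneously transport the definedness of $\La(g)$ back to $\La(f)$ and equate the two Lefschetz numbers, with no hypothesis whatsoever on the cohomology of the possibly non-normal space $C$. The main point requiring care is therefore ensuring that the maps genuinely factor through $X$ in the stated way, so that the single compact model $X$ (and its free convex space $K$) controls both $f$ and $g$.
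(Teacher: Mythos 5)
Your proof is correct and takes essentially the same approach as the paper: same compact $X$ and space $K$, your projection $\pi$ is the paper's continuous affine extension $h$ of the inclusion $X\su C$, your $g=f\circ\pi$ is the paper's $g=j\circ\bar f\circ h$, and both arguments conclude by commutativity of the Leray trace. The only (immaterial) difference is that the paper applies commutativity once, to the factorizations $f=h\circ(j\circ\bar f)$ and $g=(j\circ\bar f)\circ h$, whereas you pass through the intermediary $f|_X$ in two steps.
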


\begin{proof} Soient $C=C_1\cup\dots\cup C_n$ comme dans l'\'enonc\'e du th\'eor\`eme. Soit $X$ un sous-ensemble compact de $C$ tel que $f(C)\su X$. Pour $1\le i\le n$, soit $X_i=C_i\cap X$. Les $X_i$ sont des ferm\'es de $X$ tels que $X=X_1\cup\dots\cup X_n$. Soit $K=C(X_1)\cup\dots\cup C(X_n)\su C(X)$. L'inclusion $i:X\to C$ se prolonge en une application continue affine $r$ de $C(X)$ dans l'enveloppe convexe de $C$, qui envoie $C(X_i)$ dans $C_i$ pour tout $i$, donc induit une fonction continue $h:K\to C$.

Soit $\bar f:C\to X$ la fonction induite par $f$, et soit $j$ l'inclusion de $X$ dans $K$. Puisque $r|X=j$, nous avons $h\circ j\circ\bar f=f$. Soit $g=j\circ\bar f\circ h:K\to K$. L'image de $g$ est contenue dans $X$ et $g(x)=f(x)$ pour tout $x\in X$. Alors $\La(g)$ est d\'efini et, si le lemme 7 est vrai, $g$ a un point fixe si $\La(g)\ne0$. Mais si $\La(g)=\La(j\circ\bar f\circ h)$ est d\'efini, il en est de m\^eme de $\La(h\circ j\circ\bar f)=\La(f)$, et $\La(g)=\La(f)$ (voir \cite{GD}, p. 417). Puisque tout point fixe de $g$ appartient \ a $X$, c'est aussi un point fixe de $f$, d'o\`u le lemme. \end{proof}

Lorsque le compact $X$ est m\'etrisable, le lemme 7 r\'esulte du lemme 6.

\begin{lem} Le lemme 7 est vrai quand $X$ est m\'etrisable. \end{lem}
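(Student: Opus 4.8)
The plan is to deduce the metrisable case of lemme 7 from lemme 6, the essential difficulty being that $K$ itself is \emph{not} metrisable (it is the inductive limit of the compacta $C_m(X_1)\cup\dots\cup C_m(X_n)$). First I would record the homotopy-theoretic facts about $K$. Since $K$ is $\si$-compact it is paracompact, hence normal. In the free convex space the points of $X$ are linearly independent, so for every $u$ one has $\bigcap_{i\in u}C(X_i)=C(X_u)$, which is nonempty exactly when $X_u\ne\vi$; thus the nerve of $\{C(X_1),\dots,C(X_n)\}$ coincides with the nerve $N$ of $\{X_1,\dots,X_n\}$. The sets $C(X_i)$ are convex and closed in $K$, so lemme 1 applies to $K$ and shows that $K$ has le type d'homotopie du complexe fini $|N|$. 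Consequently $\cH(K,\Bbb Q)$ est de type fini, $\La(f)$ est défini and may be computed from singular homology, and lemme 3 is available for $K$. Note that every fixed point of $f$ lies in $X$, since $f(K)\su X$.

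Next I would build a \emph{metrisable} convex model carrying the same nerve. Let $V$ be the vector space with basis $\{b_s\}$ indexed by the simplexes of $N$, set $L_i=\ov{\mathrm{conv}}\{b_s\,|\,i\in s\}$ and $L=L_1\cup\dots\cup L_n$. Each $L_i$ is a finite-dimensional compact convex set, closed in $L$, and the linear independence of the $b_s$ gives $\bigcap_{i\in u}L_i\ne\vi$ if and only if some simplexe contains $u$, i.e. if and only if $X_u\ne\vi$; hence the nerves of $\{L_i\}$ and $\{C(X_i)\}$ are naturally isomorphic and $L=L_1\cup\dots\cup L_n$ satisfies the hypotheses of lemme 6. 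Composing the nerve map $X\to|N|$ produced as in lemme 1 (its construction requires only that $X$ soit normal and the $X_i$ fermés, not their convexity) with the affine inclusion $|N|\to L$ sending $[s]$ to $b_s$, and extending affinely by the universal property of $C(X)$, I get $q:K\to L$ with $q(C(X_i))\su L_i$. Dually, imitating the construction of the map $g$ of lemme 1 over the cells of $L$ and using la convexité des ensembles $C(X_u)$, I get $p:L\to K$ with $p(L_i)\su C(X_i)$. By lemme 2 both $p$ and $q$ induce isomorphisms sur $\cH(\cdot,\Bbb Q)$; moreover $pq:K\to K$ respecte les pièces, so the straight-line homotopy $(x,t)\mapsto(1-t)x+t\,pq(x)$ stays in each convex $C(X_i)$ and gives $pq\simeq\mathrm{id}_K$.

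Then I would transfer the fixed point problem. The map $g=q\circ f\circ p:L\to L$ is a continuous self-map of the compact metrisable convex union $L$, so lemme 6 applies to it, and by the commutativity of the Lefschetz number together with $pq\simeq\mathrm{id}_K$ one obtains $\La(g)=\La(q f p)=\La(f p q)=\La(f)$. To conclude I argue par contraposition: supposing $f$ has no fixed point, la compacité de $f(K)\su X$ and the normality of $K$ yield a finite open cover $\W$ of $K$ with $U\cap\St(f(U),\W)=\vi$ for every $U\in\W$. If the model $(L,p,q)$ is chosen \emph{fine enough} that $pq$ soit $\W$-proche de l'identité, then a fixed point $y$ of $g$ would give $x=p(y)$ with $x=pq(f(x))$ lying in a common element of $\W$ together with $f(x)$, contradicting the choice of $\W$; hence $\La(f)=\La(g)=0$, which is the theorem in contrapositive form.

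The heart of the matter — and the step I expect to be the main obstacle — is precisely this last requirement: given the prescribed cover $\W$, to produce a metrisable convex model together with piece-preserving maps $p,q$ for which $pq$ is $\W$-close to the identity while $\La(g)$ remains equal to $\La(f)$. The coarse nerve $N$ of the $n$ given pieces is far too rigid for this, since $q$ factors through the fixed polyèdre $|N|$ and forgets the fine position of the points of $X$, so that $pq$ need not be close to the identity at a fine scale. The remedy is to replace $\{X_i\}$ by a sufficiently fine finite closed refinement $\{G_\al\}$ of $X$ subordonné à $\W$ and à $\{X_i\}$, to rerun the construction above with the free convex sets $C(G_\al)\su C(X)$ in place of the $C(X_i)$, and to compare the resulting finer model with $K$ through the nerve. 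Controlling this nerve refinement so that le type d'homotopie (donc $\La$) is preserved while the mesh of $pq$ is made smaller than $\W$ is the mécanisme systématique of \cite{Ca2}; carrying it out carefully, with the compactness of $f(K)\su X$ converting the approximate fixed point into the desired contradiction, is the crux of the argument.
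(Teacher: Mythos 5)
Your treatment of the Lefschetz number (the model $L$, the piece-preserving maps $p,q$, the straight-line homotopy $pq\simeq\mathrm{id}_K$, and commutativity giving $\La(g)=\La(f)$) is coherent, but the proof has a genuine gap exactly where you flag \og the crux\fg: the transfer of the fixed point from the metrisable model $L$ back to $K$. A fixed point $y$ of $g=qfp$ only gives $x=p(y)$ with $x=p'q'(f(x))$, so you need $p'q'$ to be $\W$-close to the identity, and the remedy you sketch (replace $\{X_i\}$ by a fine finite closed cover $\{G_\al\}$ of $X$ and rerun the construction with the sets $C(G_\al)$) runs into an obstruction specific to the free topology: \emph{the free convex hull of a small set is not small in $C(X)$}. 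Your map $p'q'$ sends a point $z\in X$ somewhere into $C(G_s)$, where $G_s=\bigcap\{G_\al : z\in G_\al\}$; but in the inductive-limit topology of $C(X)=\bigcup_m C_m(X)$, for any non-isolated $x\in X$ and any sequence of infinite closed sets $G_k\su X$ shrinking to $x$, one can exclude from a single open neighbourhood of $x$ one barycentre $b_k\in C(G_k)$ with at least $k$ terms for every $k$ (the set $\{b_k\}$ is closed because it meets each compactum $C_m(X)$ in a finite set), so no neighbourhood of $x$ contains any $C(G_k)$. What is true is that $C_m(G)$ is $\W$-small for each \emph{fixed} $m$ once $\mathrm{diam}\,G$ is small enough (uniform continuity of $\mu_m$); but the $m$ your construction needs is the dimension $d$ of the nerve of $\{G_\al\}$, since $p'$ is built by coning cell by cell and its image reaches into $C_d(X)$. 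When $X$ is an infinite-dimensional compactum, $d\to\ii$ necessarily as the mesh of the cover tends to $0$, so the requirement \og mesh small relative to the $\W$-smallness threshold for $C_d$\fg\ is self-referential, and the refinement process need not terminate; nothing in your sketch addresses this. This circle is precisely the classical difficulty of Schauder's conjecture, and it is not the \og m\'ecanisme syst\'ematique\fg\ of \cite{Ca2}: that mechanism is a chain-level approximation (the algebraic-ANR property of convex sets), which is what lemmes 5 and 6 of the present paper implement, not a geometric nerve-refinement scheme.

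The missing idea is that one should not change the space but the topology. Since $X$ is metrisable, lemmes 2.1 et 2.2 de \cite{Ca1} provide a \emph{metrisable} vector topology $\tau$ on $E(X)$, coarser than the free topology, such that each $C(X_i)$ remains $\tau$-closed in $C(X)$ and $f:(K,\tau)\to X$ remains continuous. Then $(K,\tau)$ satisfies the hypotheses of lemme 6, and a fixed point of $\bar f=f:(K,\tau)\to(K,\tau)$ is \emph{literally} a fixed point of $f$ (same underlying set, same map) -- no approximation of the identity is needed at all. The Lefschetz numbers agree because the identity $j:K\to(K,\tau)$ sends each $C(X_i)$ into itself and the nerves coincide, so $j^*$ is an isomorphism by lemme 2 and $\La(f)=\La(\bar f)$. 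Your nerve model and the maps $p,q$ could at best serve to compute $\La(f)$, which lemme 2 already handles; without the re-topologization step, the fixed-point half of the lemma remains unproved.
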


\begin{proof} Si $X$ est m\'etrisable, les lemmes 2.1 et 2.2 de \cite{Ca1} nous permettent de touver une topologie vectorielle m\'etrisable $\tau$ sur $E(X)$, moins fine que la topologie libre et telle que $C(X_i)$ soit $\tau$-ferm\'e dans $C(X)$ pour $i=1,\dots,n$ et que $f:(K,\tau)\to X$ soit continue. Consid\'erons le diagramme commutatif

$$\begin{CD}
\cH((K,\tau),\Bbb Q)@>\bar{f}^*>>\cH((K,\tau),\Bbb Q)\\
@V{j^*}VV @VV{j^*}V\\
\cH(K,\Bbb Q)@>f^*>>\cH(K,\Bbb Q)
\end{CD}$$
o\`u $\bar f=f$ et $j:K\to(K,\tau)$ est l'identit\'e. D'apr\`es le lemme 2, $j^*$ est un isomorphisme, donc $\La(f)=\La(\bar f)$. Si $\La(f)\ne0$, alors $\bar f=f$ a un point fixe d'apr\`es le lemme 6. 
\end{proof}

Le cas g\'en\'eral du lemme 8 se d\'eduit du cas m\'etrisable en repr\'esentant $X$ comme limite d'un syst\`eme projectif particulier de compacts m\'etrisables.

Un syst\`eme projectif d'espaces topologiques sur un ensemble ordonn\'e filtrant $A$ sera not\'e $\bS=(X_\al,p_\al^\bt,A)$ (les $X_\al$ sont des espaces topologiques et, pour $\al\le\bt$, $p_\al^\bt:X_\bt\to X_\al$ est continue). Nous notons $\lim \bS$ la limite projective de ce syst\`eme et $p_\al$ la projection de $\lim \bS$ dans $X_\al$. Si $B$ est un sous-ensemble filtrant de $A$, nous notons $\bS|B=(X_\al,p_\al^\bt,B)$ le syst\`eme obtenu en restreignant l'ensemble des indices \`a $B$. Si $B$ est cofinal dans $A$, nous identifions naturellement $\lim(\bS|B)$ \`a $\lim \bS$. Si $\bS_1=(X_\al,p_\al^\bt,A)$ et $\bS_2=(Y_\al,q_\al^\bt,A)$ sont deux syst\`emes projectifs ayant le m\^eme ensemble d'indices, un morphisme de $\bS_1$ dans $\bS_2$ est une famille de fonctions continues $f_\al:X_\al\to Y_\al$, $\al\in A$, telle que $f_\al p_\al^\bt=q_\al^\bt f_\bt$ pour $\al\le\bt$. Un tel morphisme induit une fonction continue $\lim(f_\al)$ de $\lim \bS_1$ dans $\lim \bS_2$.

Un syst\`eme projectif $\bS=(X_\al,p_\al^\bt,A)$ est appel\'e un $\om$-syst\`eme projectif s'il v\'erifie les conditions suivantes.
\begin{itemize}
\item[($*$)] Tout sous-ensemble d\'enombrable totalement ordonn\'e de $A$ a une borne sup\'erieure.

\item[($**$)] Pour tout sous-ensemble totalement ordonn\'e $B$ de $A$ admettant une borne sup\'erieure $\bt$, la fonction $\lim_{\al\in B}p_\al^\bt:X_\bt\to\lim(\bS|B)$ est un hom\'eomorphisme.

\item[($***$)] Chaque $X_\al$ a un base d\'enombrable.
\end{itemize}

Un sous-ensemble $B$ d'un ensemble filtrant $A$ est dit ferm\'e si, pour tout sous-ensemble totalement ordonn\'e $C\su B$ admettant une borne sup\'erieure dans $A$, cette borne sup\'erieure appartient \`a $B$. Si $\bS=(X_\al,p_\al^\bt,A)$ est un $\om$-syst\`eme projectif et si $B$ est un sous-ensemble cofinal et ferm\'e de $A$, alors $\bS|B$ est aussi un $\om$-syst\`eme projectif.

Le passage du cas m\'etrisable au cas g\'en\'eral du lemme 7 est permis par le fait suivant, qui est un cas particulier du th\'eor\`eme 1.3.4 de \cite{Ch}.

\begin{lem} Soient $\bS_1=(X_\al,p_\al^\bt,A)$ et $\bS_2=(Y_\al,q_\al^\bt,A)$ deux $\om$-syst\`emes de compacts sur un m\^eme ensemble d'indices $A$, et soit $f$ une fonction continue de $\lim \bS_1$ dans $\lim \bS_2$. Si, pour tout $\al\in A$, la projection $p_\al:\lim \bS_1\to X_\al$ est surjective, alors il existe un sous-ensemble cofinal et ferm\'e $B$ de $A$ et un morphisme de $\bS_1|B$ dans $\bS_2|B$ dont la limite est $f$. \end{lem}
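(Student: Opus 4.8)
The plan is to reprove this special case of \cite{Ch} by Shchepin's closing-off method. Write $X=\lim\bS_1$ and $Y=\lim\bS_2$; by ($***$) each $X_\al$ and each $Y_\al$ is a compactum of countable weight, hence metrizable, and $X$, $Y$ are compact. The whole argument rests on the following factorization sublemma: \emph{for every continuous $h:X\to Z$ into a compactum $Z$ of countable weight there exist an $\al\in A$ and a continuous $g:X_\al\to Z$ with $g\circ p_\al=h$.} Granting this, the morphism will be obtained by closing off a cofinal subset on which $q_\al\circ f$ factors through $p_\al$.

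To prove the sublemma I would use that $X$, being compact Hausdorff, carries a unique uniformity, a base of entourages being given by the finite subfamilies of the projections $p_\gamma$ together with metrics $d_\gamma$ on the metrizable compacta $X_\gamma$. Since $X$ is compact, $h$ is uniformly continuous, so for each $k$ there are a finite set $F_k\su A$ and $\delta_k>0$ such that $d_\gamma(p_\gamma x,p_\gamma x')<\delta_k$ for all $\gamma\in F_k$ implies $d_Z(hx,hx')<1/k$. The countable set $\bigcup_kF_k$ is dominated, by directedness and ($*$), by the supremum $\al$ of a countable chain, and $\al\in A$ by ($*$). If $p_\al(x)=p_\al(x')$ then $p_\gamma(x)=p_\gamma(x')$ for every $\gamma\le\al$, in particular for $\gamma\in\bigcup_kF_k$, whence $hx=hx'$; thus $h$ factors set-theoretically through $p_\al$. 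The factor $g$ is continuous because $p_\al$ is a continuous surjection of compacta---this is where the surjectivity hypothesis enters---hence a quotient map.

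Next I would set $B=\{\al\in A:\ q_\al\circ f\text{ factors through }p_\al\}$ and check it is cofinal and closed. For cofinality, given $\al_0$ build $\al_0\le\al_1\le\cdots$ with $\al_{n+1}\ge\al_n$ furnished by the sublemma applied to $q_{\al_n}\circ f$, and let $\bt=\sup_n\al_n$, which lies in $A$ by ($*$). Each $q_{\al_n}\circ f$ then factors through $p_\bt$, say as $h_n\circ p_\bt$; cancelling the surjection $p_\bt$ in $q_{\al_m}^{\al_n}\circ h_n\circ p_\bt=q_{\al_m}\circ f=h_m\circ p_\bt$ shows the $h_n$ compatible, so by ($**$) for $\bS_2$ (which gives $Y_\bt=\lim_nY_{\al_n}$) they assemble to a map $f_\bt:X_\bt\to Y_\bt$ with $f_\bt\circ p_\bt=q_\bt\circ f$; thus $\bt\in B$. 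Closedness is the same computation: if $C\su B$ is totally ordered with supremum $\bt\in A$, then ($**$) identifies $X_\bt$ and $Y_\bt$ with the limits over $C$, and the factorizations $q_\gamma\circ f=f_\gamma\circ p_\gamma$ ($\gamma\in C$), again compatible by the surjectivity cancellation, assemble to a factorization at $\bt$.

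Finally, for $\al\in B$ let $f_\al:X_\al\to Y_\al$ be the factor of $q_\al\circ f$ through $p_\al$, continuous by the quotient-map argument of the second paragraph. For $\al\le\bt$ in $B$ one has $q_\al^\bt\circ f_\bt\circ p_\bt=q_\al^\bt\circ q_\bt\circ f=q_\al\circ f=f_\al\circ p_\al^\bt\circ p_\bt$, and surjectivity of $p_\bt$ yields $q_\al^\bt\circ f_\bt=f_\al\circ p_\al^\bt$, so $(f_\al)_{\al\in B}$ is a morphism $\bS_1|B\to\bS_2|B$. As $B$ is cofinal and closed, $\lim(\bS_i|B)=\lim\bS_i$, and from $q_\al\circ\lim(f_\al)=f_\al\circ p_\al=q_\al\circ f$ for all $\al\in B$, together with the fact that the $q_\al$ separate the points of $Y$, one gets $\lim(f_\al)=f$. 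I expect the factorization sublemma to be the crux: compressing a map into a countable-weight compactum down to a single stage is exactly where axioms ($*$) and ($***$) are spent, while ($**$) does the work of pushing factorizations up to suprema, and the surjectivity hypothesis is what repeatedly turns set-theoretic factorizations into continuous maps.
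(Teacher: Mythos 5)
Your proof is correct, but the comparison here is somewhat unusual: the paper does not prove this lemma at all --- it invokes it as a special case of Theorem 1.3.4 of \cite{Ch} (Shchepin's spectral theorem for $\om$-spectra). So your argument supplies a self-contained proof where the paper only gives a citation. Your scheme (factorization sublemma plus closing-off) is the standard skeleton of that general theorem, and what your write-up adds is a correct accounting of where each special hypothesis of this lemma is spent, allowing shortcuts that the general theorem cannot take: ($***$) makes each $X_\al$, $Y_\al$ a metric compactum, so the sublemma can be run through the unique uniformity of $X=\lim\bS_1$ and uniform continuity of $h$; ($*$) converts the countably many finite index sets $F_k$ into a single index $\al$, which is precisely the point where exact (rather than merely approximate) factorization would fail for a general inverse system of compacta; surjectivity of the $p_\al$ turns set-theoretic factorizations into continuous ones (a continuous surjection of compacta is closed, hence a quotient map) and provides the cancellation that makes the factors compatible; and ($**$) pushes factorizations up to suprema of chains, which is what yields both cofinality and closedness of $B$. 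What the paper's route buys is brevity and generality; what yours buys is that the reader sees the mechanism, in a form considerably simpler than the general spectral theorem because compactness and surjectivity are available throughout.

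Two points are left tacit and should be made explicit in a polished version: (a) that the sets $\{(x,x')\,:\,d_\gamma(p_\gamma x,p_\gamma x')<\delta,\ \gamma\in F\}$, with $F\su A$ finite and $\delta>0$, really do form a base of the unique uniformity of the compact space $X$ (a Lebesgue-number argument over a finite cover of the diagonal by basic boxes, using the directedness of $A$ to reduce to single indices); and (b) that a factorization through $p_\al$ persists as a factorization through $p_{\al'}$ for every $\al'\ge\al$ (compose with $p_\al^{\al'}$), which you use silently when replacing the index furnished by the sublemma by $\al_{n+1}$ and then by $\bt$. Both are routine, so the proposal stands as a complete and correct proof of the lemma.
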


\noindent
{\it D\'emonstration du lemme 7 dans le cas non m\'etrisable.} Nous utilisons la m\'ethode expliqu\'ee dans \cite{Ca3} dans le cas des espaces ULC. Nous repr\'esentons $X$ comme limite d'un $\om$-syst\`eme projectif $\bS=(X_\al,p_\al^\bt,A)$ de compacts m\'etrisables tel que les projections $p_\al:X\to X_\al$ soient surjectives.

Pour $m\ge0$, soit $K_m=\bigcup_{i=1}^nC_m(X_i)=K\cap C_m(X)$. Pour $\al\in A$ et $1\le i\le n$, soit $X_{i,\al}=p_\al(X_i)$. Posons $K_\al=\bigcup_{i=1}^n  C(X_{i,\al})\su C(X_\al)$ et, pour $m\ge0$, $K_{\al,m}=K_\al\cap C_m(X)=\bigcup_{i=1}^nC_m(X_{i,\al})$. Les fonctions $p_\al$ induisent des fonctions $\tilde p_\al:K\to K_\al$ et $p_{\al,m}:K_m\to K_{\al,m}$. Pour $\al\le\bt$, les fonctions $p_\al^\bt$ induisent des fonctions $\tilde p_\al^\bt:K_\bt\to K_\al$ et $p_{\al,m}^\bt:K_{\bt,m}\to K_{\al,m}$. Les foncteurs $C_m(\ \cdot\ )$ commutent aux limites projectives; il est facile d'en d\'eduire que $K_m(\bS)=(K_{\al,m},p_{\al,m}^\bt,A)$ est un $\om$-syst\`eme projectif de limite $K_m$. La surjectivit\'e des fonctions $p_\al$ entra\^\i ne que $p_{\al,m}:K_m\to K_{\al,m}$ est surjective.

Nous avons $K_0(\bS)=\bS$; appliquant le lemme 10 \`a la restriction de $f$ \`a $K_0=X$, nous obtenons un sous-ensemble $A_0$, cofinal et ferm\'e dans $A$, et un morphisme $(f_\al^0)_{\al\in A_0}:K_0(\bS)|A_0\to \bS|A_0$ de limite $f|X$. Comme $K_1(\bS)|A_0$ et $\bS|A_0$ sont des $\om$-syst\`emes projectifs, nous pouvons appliquer le lemme 10 \`a la restriction de $f$ \`a $K_1$ pour obtenir un sous-ensemble $A_1$ de $A_0$, cofinal et ferm\'e dans $A_0$, donc aussi dans $A$, et un morphisme $(f_\al^1)_{\al\in A_1}: K_1(\bS)|A_1\to\bS|A_1$ de limite $f|K_1$. Inductivement, nous construisons ainsi une suite d\'ecroissante $\{A_m\}$ de sous-ensembles cofinaux et ferm\'es de $A$ et des morphismes $(f_\al^m)_{\al\in A_m}: K_m(\bS)|A_m\to\bS|A_m$ de limite $f|K_m$. La condition ($*$) garantit que $B=\bigcap_{m=1}^\ii$ est cofinal dans $A$, et la surjectivit\'e des fonctions $p_{\al,m}$ garantit que, pour tout $\al\in B$, $f_\al^m|K_{\al,k}=f_\al^k$ pour $k\le m$, ce qui nous permet de d\'efinir une fonction continue $\bar f_\al:K_\al\to X_\al$ par $\bar f_\al|K_{\al,m}=f_\al^m$ pour tout $m$. Soit $f_\al:K_\al\to K_\al$ la fonction induite par $\bar f_\al$. Ces fonctions v\'erifient $f_\al\tilde p_\al^\bt=\tilde p_\al^\bt f_\bt$ pour $\al\le\bt$ dans $A$.

Le nerf du syst\`eme $\CX=(X_1,\dots,X_n)$ (resp. $\CX_\al=(X_{1,\al},\dots,X_{n,\al})$ est naturellement isomorphe au nerf du syst\`eme $C(\CX)=(C(X_1),\dots,C(X_n))$ (resp. $C(\CX_\al)=(C(X_{1,\al}),\dots,C(X_{n,\al}))$). Puisque le compact $X$ est la limite projective du syst\`eme $\bS|B$, le nerf de $\CX$ est naturellement isomorphe au nerf de $\CX_\al$ si $\al$ est assez grand. Quitte \`a remplacer $B$ par un sous-ensemble cofinal, nous pouvons donc supposer que les nerfs des syst\`emes $\CX$ et $\CX_\al$ sont naturellement isomorphes pour tout $\al\in B$. Les nerfs des syst\`emes $C(\CX)$ et $C(\CX_\al)$ sont alors naturellement isomorphes, et il r\'esulte du lemme 2 que $\tilde p_\al^*:\cH(K_\al,\Bbb Q)\to \cH(K,\Bbb Q)$ est un isomorphisme. Pour $\al\in B$, consid\'erons le diagramme commutatif
$$
\begin{CD}
\cH(K,\Bbb Q)@>f^*>>\cH(K,\Bbb Q)\\
@A{\tilde p_\al^*}AA @AA{\tilde p_\al^*}A\\
\cH(K_\al,\Bbb Q)@>f^*>>\cH(K_\al,\Bbb Q)
\end{CD}
$$

Nous avons remarqu\'e plus haut que $\La(f)$ et $\La(f_\al)$ sont d\'efinis. Puisque $\tilde p_\al^*$ est un isomorphisme, nous avons $\La(f)=\La(f_\al)$. Si $\La(f)\ne0$, alors l'ensemble $F_\al$ des points fixes de $f_\al$ n'est pas vide d'apr\`es le lemme 9. Puisque $f_\al\tilde p_\al^\bt=\tilde p_\al^\bt f_\bt$, nous avons $p_\al^\bt(F_\bt)\su F_\al$ pour $\al\le\bt$ dans $B$, donc $(F_\al,p_\al^\bt|F_\bt,B)$ est un syst\`eme projectif de compacts non vides. La limite projective de ce syst\`eme est un sous-ensemble non vide de $\lim \bS|B=X$, et tout point de ce sous-ensemble est un point fixe de $f$. \hfill $\Box$


\begin{thebibliography}{9}
\bibitem{Ca1}R. Cauty. \emph{Un espace m\'etrique lin\'eaire qui n'est pas un r\'etracte absolu}, Fund. Math. 146, 1994, 85-99.

\bibitem{Ca2}R. Cauty. \emph{R\'etractes absolus de voisinage alg\'ebriques}, Serdica Math. J. 31, 2005, 309-354.

\bibitem{Ca3}R. Cauty. \emph{Points fixes des applications compactes dans les espaces ULC}, {\tt arXiv:1010.2401}.

\bibitem{Ch}A. Chigogidze. \emph{Inverse spectra}, Noth-Holland, Amsterdam, 1996.

\bibitem{GD}A. Granas, J. Dugundji. \emph{Fixed point theory}, Springer Verlag, Berlin, 2003.

\bibitem{Hi}C. J. Himmelberg. \emph{Some theorems on equiconnected and locally equiconnected spaces}, Trans. Amer. Math. Soc. 115, 1965, 43-53.

\bibitem{Hu}S.T. Hu. \emph{Theory of retracts}, Wayne State University Press, Detroit, 1965.

\bibitem{Sp}E. H. Spanier. \emph{Algebraic Topology}, Mc Graw Hill, New York, 1966.

\end{thebibliography}
\end{document}